\newtheorem{thm}{Theorem}[section]
\newtheorem{thmx}{Theorem}[section]
\newtheorem{cor}[thmx]{Corollary}
\newtheorem{lem}[thm]{Lemma}
\newtheorem{case}{Case}
\newtheorem{subcase}{Subcase}
\renewcommand{\thethmx}{\Alph{thmx}}
\newtheorem{rem}{Remark}
\newtheorem{Que}{Question}
\newtheorem{exa}{Example}
\numberwithin{equation}{section}
\begin{document}

\title[]{On Meromorphic Solutions to a Difference Equation of Tumura-Clunie Type}

\author{JianRen Long*, Xuxu Xiang \quad }

\address{Jianren Long \newline School of Mathematical Sciences, Guizhou Normal University,
Guiyang, 550025, P.R. China. }
\email{longjianren2004@163.com}

\address{Xuxu Xiang \newline School of Mathematical Sciences, Guizhou Normal University,
Guiyang, 550025, P.R. China. }
\email{1245410002@qq.com}

%\address{Shiwen Wang \newline School of Mathematical Sciences, Guizhou Normal University,
%Guiyang, 550025, P.R. China. }
%\email{1759257600@qq.com}

%\address{Zhigao  Qin\newline School of Mathematical Sciences, Guizhou Normal University,
%Guiyang, 550025, P.R. China. }
%\email{qzhigao@qq.com}

\date{}

%%............................................................................................................................................................
%%...........................................................................
%??(abstract)...................................................................
%%............................................................................................................................................................

\begin{abstract}The meromorphic solutions $f$ with $\rho_2(f)<1$ of the non-linear  difference equation
\begin{align*}
f^n(z)+P_d(z,f)=p_1e^{{\lambda_1}z}+p_2e^{{\lambda_2}z}+p_3e^{{\lambda_3}z},
\end{align*}
are characterized in terms of exponential functions  using Nevanlinna theory, under certain conditions on $\lambda_j$ for $j=1,2,3$. Here, $n>2$, $P_d(z,f)$ is a difference polynomial in $f$ of degree $\le n-1$, and $\lambda_j,~p_j\not=0$ for $~j=1,2,3$. These results improve upon those previously obtained by Chen et al.[Bull. Korean Math. Soc. 61, 745-762 (2024)]. Some examples are provided to illustrate these results. Additionally, if $P_d(z,f)$ is a differential-difference polynomial, then under the supplementary condition  $N(r,f)=S(r,f)$, by applying the same proof method, these conclusions still hold.
\end{abstract}

%\thanks{*Corresponding author}

%%.............................................................................................................................................................
%%.........................................................................
%?????(keywords)....................................................................
%%.............................................................................................................................................................

\keywords{Nevanlinna theory; Tumura-Clunie type difference equations; Meromorphic functions; Existence; Order of growth\\
2020 Mathematics Subject Classification: 39A45, 30D35 \\
*Corresponding author. \\
}
%\qquad *Correspording author}
\maketitle
%%..............................................................................................................................................................
%%.....................................................................................
%????(Introduction}.......................................................
%%.................
\section{Introduction and main results }

Let $f$ be a meromorphic function in the complex plane $\mathbb{C}$. Assume that the reader is familiar with the standard notation and basic results of Nevanlinna theory, such as the proximity function $m(r,f),$ the counting function $N(r,f)$ and the characteristic function$~T(r,f)$; for the more details, see \cite{hayman,ccy2003}. A meromorphic
function $g$ is said to be a small function of $f$ if $T(r,g)=S(r,f)$, where $S(r,f)$  denotes any quantity
that satisfies $S(r,f)= o(T(r, f))$ as $r$ tends to infinity, outside of a possible exceptional set of finite logarithmic measure. We use $\rho(f)=\underset{r\rightarrow \infty}{\lim\sup}\frac{\log^+T(r,f)}{\log r}$ and $\mu(f)=\underset{r\rightarrow \infty}{\lim\inf}\frac{\log^+T(r,f)}{\log r}$ to denote the order and lower order of $f$, respectively. The hyper-order of $f$ is defined by  $\rho_2(f)=\underset{r\rightarrow \infty}{\lim\sup}\frac{\log^+\log^+T(r,f)}{\log r}.$   In general, the differential-difference polynomial of $f$ is defined by
\[P_d(z,f)=\sum_{i=1}^{m}b_i(z)[f^{(\nu_{i_1})}(z+c_{i_1})]^{k_{i_1}}\cdots[f^{(\nu_{i_m})}(z+c_{i_m})]^{k_{i_m}} ,\]
where~$k_{ij}$~are nonnegative integers, $d=\max\limits_{1\leq i\leq m}\{k_{i_1}+k_{i_2}+\cdots +k_{i_m}\}$ is the degree of $P_d(z, f)$, coefficients~\(b_i(z)\)~are meromorphic functions.~If~\(c_{i_1}=\dots =c_{i_m}\)=0,~ then~\(P_d(z,f)\)~is the differential polynomial of $f$.~If~$\nu_{i_j}=0$ for$~j=1,~2,...,m$, then~$P_d(z,f)$~is the difference polynomial of $f$.

The study of Tumura-Clunie type complex functional equations originated from a generalization of the theorem of Tumura-Clunie\cite{clu,Tu} given by Hayman\cite[p. 69]{hayman} in the last century.
\begin{renewcommand}{\thethm}{\Alph{thm}}
\begin{thm}[\cite{hayman}]
\label{thA}
 If nonconstant meromorphic functions $f$ and $g$ in the nonlinear differential equation
 \begin{align}
 \label{tc}
 f^n(z)+P_d(z,f)=g(z)
 \end{align}
satisfies $N(r,f)+N(r,\frac{1}{g})=S(r,f)$, where $P_d(z,f)$ is a differential polynomial in $f$ with degree $d\leq{n-1}$,  then we have $g(z)=(f(z)+\gamma(z))^n$, where
$\gamma$ is a small meromorphic function of $f$.
\end{thm}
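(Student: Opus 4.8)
The plan is to run the classical Tumura--Clunie argument, arranged so as to absorb the derivatives appearing in $P_d$. We may assume $f$ is transcendental, since the hypotheses cannot be met by a nonconstant rational function. First we record the routine a priori estimates. As $N(r,f)=S(r,f)$, the poles of every differential monomial in $P_d$ come only from poles of $f$ and of its (small) coefficients, so $N(r,P_d(z,f))=S(r,f)$; using $m(r,f^{(\nu)}/f)=S(r,f)$ one also gets $m(r,P_d(z,f))\le(n-1)m(r,f)+S(r,f)$, hence $T(r,P_d(z,f))\le(n-1)T(r,f)+S(r,f)$. From $f^n=g-P_d$ and $g=f^n+P_d$ it follows that $T(r,f)\le T(r,g)+S(r,f)$ and $T(r,g)=O(T(r,f))$, so $g$ is nonconstant, $S(r,g)=S(r,f)$, and $N(r,g)=S(r,f)$. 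Combining $N(r,g)+N(r,1/g)=S(r,f)$ with the lemma on the logarithmic derivative gives the crucial fact that $g'/g$ is a small function of $f$. The goal is to produce a small meromorphic $\gamma$ with $g=(f+\gamma)^n$.

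The heart of the argument is a Clunie-type reduction. Differentiating $f^n+P_d=g$ gives $g'=nf^{n-1}f'+P_d^{*}$, where $P_d^{*}:=\frac{d}{dz}P_d(z,f)$ is again a differential polynomial in $f$ of degree $\le n-1$ with small coefficients; substituting $g'=(g'/g)(f^n+P_d)$ and rearranging yields
\[
f^{\,n-1}\Bigl(nf'-\tfrac{g'}{g}\,f\Bigr)=\tfrac{g'}{g}\,P_d-P_d^{*},
\]
whose right-hand side is a differential polynomial in $f$ of degree $\le n-1$ with small coefficients. Clunie's lemma then gives $m\bigl(r,\,nf'-\tfrac{g'}{g}f\bigr)=S(r,f)$, and since the poles of this quantity arise only from those of $f$ and of $g'/g$, it is in fact a small function, say $n\beta$. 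Hence $f'=\alpha f+\beta$ with $\alpha:=\tfrac1n\tfrac{g'}{g}$ and $\beta$ small; an easy induction gives $f^{(k)}=\alpha_k f+\beta_k$ with $\alpha_k,\beta_k$ small for every $k$. Substituting these into each monomial of $P_d$ and expanding shows $P_d(z,f)=\sum_{m=0}^{n-1}c_m f^m$ with all $c_m$ small, so the problem is reduced to the genuinely polynomial case $g=\sum_{m=0}^n c_m f^m$ with $c_n\equiv1$ and $c_0,\dots,c_{n-1}$ small.

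To finish, put $R(z,X):=\sum_{m=0}^n c_m(z)X^m$, a monic polynomial of degree $n$ in $X$ with small coefficients, so $R(z,f(z))\equiv g(z)$. If $R(z,X)=(X-\omega)^n$ for some meromorphic $\omega$---which is then necessarily a small function---we get $g=(f-\omega)^n=(f+\gamma)^n$ with $\gamma:=-\omega$, and we are done. Otherwise $R$ has two distinct roots $\omega_1\not\equiv\omega_2$; being roots of a monic polynomial with small coefficients, they are small algebroid functions of $f$. From $R(z,f(z))\equiv g(z)$ it follows that $f(z_0)=\omega_j(z_0)$ forces $g(z_0)=0$, so $\bar N\bigl(r,1/(f-\omega_j)\bigr)\le\bar N(r,1/g)+S(r,f)=S(r,f)$ for $j=1,2$. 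Applying the second main theorem for three small functions to $f$ with targets $\omega_1,\omega_2,\infty$ then gives
\[
T(r,f)\le\bar N\!\left(r,\tfrac{1}{f-\omega_1}\right)+\bar N\!\left(r,\tfrac{1}{f-\omega_2}\right)+\bar N(r,f)+S(r,f)=S(r,f),
\]
contradicting the transcendence of $f$. Hence the first alternative must hold, which proves the theorem.

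The step I expect to be the main obstacle is the Clunie reduction and its immediate consequences: one must verify carefully that the displayed identity really has the shape required by Clunie's lemma (total degree $\le n-1$ on the right, small-function coefficients throughout), and then bound the poles of $nf'-\tfrac{g'}{g}f$ in order to upgrade $m(r,\cdot)=S(r,f)$ to $T(r,\cdot)=S(r,f)$. A secondary technical point is the closing use of the second main theorem, since the roots $\omega_j$ are a priori only algebroid: one invokes the version of the three-small-function theorem valid for algebroid small functions, or, equivalently, handles the branch points of the $\omega_j$ (the zeros of the discriminant of $R$) separately---in either case their contribution is $S(r,f)$.
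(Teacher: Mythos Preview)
The paper does not contain a proof of Theorem~A at all: it is quoted as the classical Tumura--Clunie theorem from Hayman's book \cite[p.~69]{hayman} and used only as historical motivation for the paper's own results on difference equations. There is therefore no ``paper's own proof'' against which to compare your attempt.

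That said, your outline is the standard Tumura--Clunie argument and is essentially correct. The reduction via Clunie's lemma to $f'=\alpha f+\beta$ with $\alpha=\tfrac{1}{n}\tfrac{g'}{g}$ and $\beta$ small, followed by the inductive expression $f^{(k)}=\alpha_k f+\beta_k$ and the collapse of $P_d(z,f)$ to an ordinary polynomial $\sum_{m=0}^{n-1}c_m f^m$ in $f$ with small coefficients, is exactly how the classical proof proceeds. Your identification of the Clunie step as the crux is apt, and the pole-counting to upgrade $m(r,\cdot)=S(r,f)$ to $T(r,\cdot)=S(r,f)$ is correctly handled.

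The one place that would require additional care in a full write-up is the closing step. The roots $\omega_j$ of $R(z,X)$ are, as you note, in general only algebroid, and the second main theorem for small targets in the form you invoke needs single-valued meromorphic small functions. Your suggested fixes are viable, but the cleanest traditional route avoids this issue: set $\gamma:=c_{n-1}/n$ and show directly that $g-(f+\gamma)^n$, which is a polynomial in $f$ of degree at most $n-2$ with small coefficients, must vanish identically. One does this by iterating the Clunie reduction (applied now to $(f+\gamma)$ in place of $f$, using that $(f+\gamma)'-\alpha(f+\gamma)$ is again small), which forces the lower-order coefficients to vanish one by one. This sidesteps algebroid targets entirely and is closer in spirit to Hayman's original presentation.
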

\end{renewcommand}

Motivated by the fact that $f=\sin z$ satisfies the equation
\begin{align}
\label{c1}
4f^3(z)+3f^{''}(z)=-\sin3 z
\end{align}
 and  Theorem \ref{thA}, scholars have tried to study the nonlinear differential equation \eqref{tc} for $g(z)=p_1e^{a_1z}+p_2e^{a_2z}$, where $p_j,~a_j,~j=1,2$, are nonzero constants, such as \cite{llw2014,lp2011,ccy2006,ccy2004,zyy2021} and so on. In particular, Yang and Li\cite{ccy2004} showed that the equation \eqref{c1} has exactly three non-constant entire solutions: $f_1(z)=\sin z$, $f_2(z)=\frac{\sqrt 3}{2}\cos z-\frac{1}{2}{\sin z}$, and
$f_3(z)=-\frac{\sqrt 3}{2}\rm{cosz}-\frac{1}{2}{\sin z}$.

With the establishment and development of the difference Nevanlinna theory, see \cite{chiang,halburd2006,halburd20062} for the details, the research of difference equations has become a hot topic, see\cite{halburd2007,halburd2017,lk,jrl} and references therein.
 In 2010,  Yang and Laine\cite{ccy2010}  presented the following  theorem on
difference analogues of equation \eqref{c1}.

\begin{renewcommand}{\thethm}{\Alph{thm}}
\begin{thm}[\cite{ccy2010}]
\label{thb}
A nonlinear difference equation
\begin{align*}
f^3(z)+q(z)f(z+1)=c\sin(bz),
\end{align*}
where $q(z)$ is a nonconstant polynomial and $b,~c$ are nonzero constants, does not admit entire solution of finite order. If $q(z)=q$ is a nonzero constant, then the above equation possesses three distinct entire solutions of finite order, provided that $b=3n\pi$ and $q^3=(-1)^{n+1}\frac{27}{4}c^2$ for a nonzero integer n.
\end{thm}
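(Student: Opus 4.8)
The plan is to treat the non‑existence assertion and the explicit construction of the three solutions separately; the common preliminary is a growth estimate. Suppose $f$ is an entire solution of finite order. Writing the equation as $f^{3}(z) = c\sin(bz) - q(z)f(z+1)$ and using the shift estimate $T(r,f(z+1)) = T(r,f) + S(r,f)$ for finite‑order $f$ together with $T(r,c\sin(bz)) \asymp r$, a two‑sided comparison of characteristic functions gives $3T(r,f) \le T(r,c\sin(bz)) + T(r,f) + S(r,f)$ and $T(r,c\sin(bz)) \le 4T(r,f) + S(r,f)$; hence $\rho(f) = 1$ and $T(r,f) \asymp r$. By the Hadamard factorization theorem we may then write $f(z) = z^{m}\,\Pi(z)\,e^{az+b_0}$, where $\Pi$ is the canonical product over the non‑zero zeros of $f$ (of genus $\le 1$), $m \in \mathbb{Z}_{\ge 0}$, and $a,b_0$ are constants.

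For the non‑existence part, substitute this factorization and $c\sin(bz) = \tfrac{c}{2i}e^{ibz} - \tfrac{c}{2i}e^{-ibz}$ into the equation to obtain
\[
z^{3m}e^{3b_0}\Pi(z)^{3}e^{3az} + q(z)(z+1)^{m}e^{b_0+a}\Pi(z+1)e^{az} - \tfrac{c}{2i}e^{ibz} + \tfrac{c}{2i}e^{-ibz} = 0 .
\]
If the exponent of convergence $\lambda(f) < 1$, then the amplitudes $z^{3m}\Pi(z)^{3}$ and $q(z)(z+1)^{m}\Pi(z+1)$ are entire of order $< 1$, so Borel's theorem on exponential sums applies to the four frequencies $3a,\,a,\,ib,\,-ib$, among which at most one coincidence can occur since $a \ne 0$ and $b \ne 0$. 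In every case the amplitude of the $e^{az}$‑term, namely $q(z)(z+1)^{m}e^{b_0+a}\Pi(z+1)$, is a \emph{non‑constant} entire function of order $< 1$ because $q$ is a non‑constant polynomial; Borel's theorem forces this amplitude to be identically zero or identically a non‑zero constant, which is impossible. Hence no solution with $\lambda(f) < 1$ exists.

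The main obstacle is the remaining case $\lambda(f) = 1$: here $\Pi^{3}$ and $\Pi(\cdot+1)$ have order exactly one, so the elementary form of Borel's theorem no longer applies, and the Nevanlinna error term $S(r,f)$ has the same order of magnitude ($r$) as the principal terms, so asymptotic estimates must be replaced by exact bookkeeping. The intended route is to strip the right‑hand side down to a single exponential: applying the operator $\tfrac{d}{dz} - ib$ (which sends $c\sin(bz)$ to $bc\,e^{-ibz}$) to the equation yields $f^{2}\bigl(3f' - ibf\bigr) + q'f(z+1) + q\bigl(f'(z+1) - ibf(z+1)\bigr) = bc\,e^{-ibz}$, an algebraic differential‑difference equation with polynomial coefficients; combining this with its $\tfrac{d}{dz}+ib$ counterpart and invoking a difference Clunie‑/Tumura--Clunie‑type lemma for finite‑order functions should force $f$ to be an exponential polynomial, thereby reducing to the previous case and again squeezing $q$ to be constant. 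Making this reduction watertight is where the real work lies.

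For the construction of the three solutions when $q$ is the indicated constant, I would use the triple‑angle identity $\sin^{3}\theta = \tfrac14\bigl(3\sin\theta - \sin 3\theta\bigr)$. Trying $f(z) = \alpha\sin(n\pi z + \phi)$ and using $f(z+1) = (-1)^{n}f(z)$ gives
\[
f^{3}(z) + q\,f(z+1) = \Bigl(\tfrac{3\alpha^{3}}{4} + (-1)^{n}q\alpha\Bigr)\sin(n\pi z + \phi) - \tfrac{\alpha^{3}}{4}\sin(3n\pi z + 3\phi).
\]
Annihilating the first summand forces $\alpha^{2} = \tfrac{4}{3}(-1)^{n+1}q$, and matching the second summand with $c\sin(bz)$ forces $b = 3n\pi$, $3\phi \in \pi\mathbb{Z}$, and $\alpha^{3} = \pm 4c$ with the sign determined by $\phi$. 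Cubing the first relation, squaring the second, and equating then gives exactly $q^{3} = (-1)^{n+1}\tfrac{27}{4}c^{2}$; fixing a square root $\alpha_0$ of $\tfrac{4}{3}(-1)^{n+1}q$ normalised so that $\alpha_0^{3} = 4c$, the admissible phases reduce to $\phi \in \{\pi/3,\ \pi,\ 5\pi/3\}$, which produces the three pairwise distinct order‑one entire solutions $f_k(z) = \alpha_0\sin\!\bigl(n\pi z + \tfrac{(2k-1)\pi}{3}\bigr)$, $k = 1,2,3$. One finally checks, again via the Hadamard/Borel analysis above, that these exhaust the finite‑order entire solutions, so the count ``three'' is sharp.
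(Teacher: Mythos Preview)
The paper does not contain a proof of this statement. Theorem~B is quoted verbatim from Yang--Laine \cite{ccy2010} as motivation for the paper's own results on equation~\eqref{F}; no argument for it is given or even sketched here. So there is nothing in the present paper to compare your proposal against.

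On the merits of your attempt itself: the constructive half is essentially fine --- the triple-angle ansatz $f(z)=\alpha\sin(n\pi z+\phi)$ together with $f(z+1)=(-1)^n f(z)$ does produce three distinct finite-order entire solutions under exactly the stated constraints $b=3n\pi$ and $q^3=(-1)^{n+1}\tfrac{27}{4}c^2$ (the bookkeeping on the admissible phases can be tidied, but the mechanism is correct). Note, however, that the theorem as stated only asserts the \emph{existence} of three solutions; it does not claim they are all the finite-order entire solutions, so your final ``sharpness'' remark is extra and would need separate justification.

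The non-existence half has a genuine gap that you yourself flag. Your Borel argument disposes of the case $\lambda(f)<1$ cleanly, but for $\lambda(f)=1$ you only outline a strategy (apply $\frac{d}{dz}\mp ib$, invoke a Clunie-type lemma, reduce to an exponential polynomial). That reduction is exactly the substantive part of the Yang--Laine proof, and it does not follow from anything in the present paper. In particular, a Clunie/Tumura--Clunie lemma alone will not directly force $f$ to be an exponential polynomial here; one needs a more careful analysis (in the original source this goes through growth and zero-distribution estimates specific to the equation). So as written, the non-existence argument is incomplete.
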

\end{renewcommand}

Motivated by the Theorem \ref{thb} and
 originally in the theorem of Tumura-Clunie\cite{clu,Tu}, which considers $P_d(z,f)$ as a  polynomial in $f$ with constant coefficients and $g=p(z)e^{a(z)}$, where $a(z)$ is an entire function and $p(z)$ is a small function relative to $f$, scholars have  made
attempts to  find meromorphic solutions of equation \eqref{tc} provided $P_d(z,f)$ is  a difference monomial or polynomial in $f$ and $g$ extends to be a linear combination of two or more linearly independent exponential terms. Examples of such studies include \cite{lxm,mzq2022,qdz,wzt2012,xxx,zm,ccy2010}.

In 2021, Liu and Mao\cite{lhf2021} obtained the following result, which can be viewed as the difference analogues of the results in \cite{lp2011,ccy2006}.

\begin{renewcommand}{\thethm}{\Alph{thm} }
\begin{thm}[\cite{lhf2021}]
Let $n\ge2$ be an integer, $P_d(z,f)\not\equiv 0$ be a difference polynomial in $f$ of degree $d\le n-1$ with polynomial coefficients, and let $p_j,~a_j,$ for $~j=1,2$, be nonzero constants satisfying  $\frac{a_1}{a_2}\in \{ -1,\frac{t}{n},\frac{n}{t}:1\le t \le n-1\}$. If difference equation
\begin{align}
\label{D}
f^n(z)+P_d(z,f)=p_1e^{a_1z}+p_2e^{a_2z}
\end{align}
admits a finite order meromorphic solution $f$, then one of the following statements holds.
\begin{itemize}
\item[(1)]$f(z)=\gamma_0(z)+\gamma_1(z)e^{\frac{a_1z}{n}}+\gamma_2(z)e^{\frac{a_2z}{n}}$, and $\frac{a_1}{a_2}=-1$, where $\gamma_i^n=p_i,~i=1,2$, $\gamma_0$ is a polynomial;
    \item [(2)]$f(z)=\gamma_0(z)+\gamma_1(z)e^{{\beta z}}$ and $\frac{a_1}{a_2}=\frac{n}{t}$ (or $\frac{t}{n}$), where $n\beta=a_1$ (or $a_2$), $\gamma_1^n=p_1$ (or $\gamma_2^n=p_2$), $\gamma_0$ is a polynomial. Moreover, if $P_d(z,0)\not\equiv 0$, then $\gamma_0\not\equiv 0$.
\end{itemize}
\end{thm}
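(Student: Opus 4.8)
The plan is to combine a pole-chain argument, elementary Nevanlinna estimates for difference polynomials, and a Clunie-type reduction exploiting the constant-coefficient ODE satisfied by the right-hand side. \emph{Step 1 (the solution is entire of order $1$).} I would first rule out poles. If $f$ had a pole of multiplicity $m$ at $z_0$, then $f^n$ would have a pole of multiplicity $nm$ there; since the right-hand side is entire and $\deg_f P_d\le n-1<n$, the difference polynomial $P_d(z,f)$ would be forced to have a pole of multiplicity $\ge nm$ at $z_0$, which is possible only if $f$ has a pole of multiplicity $\ge\lceil nm/(n-1)\rceil>m$ at some shifted point $z_0+c_{ij}$. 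Iterating produces an infinite chain of poles of $f$ whose multiplicities grow at least by the factor $n/(n-1)>1$ at each step; this is incompatible with $\rho(f)<\infty$ if the chain escapes to infinity, and incompatible with the finiteness of the pole order at a single point if the chain stays bounded. Hence $f$ is entire, so $N(r,P_d(z,f))=0$, and by the difference analogue of the lemma on the logarithmic derivative (Chiang--Feng, Halburd--Korhonen) one has $m(r,P_d(z,f))\le d\,m(r,f)+S(r,f)$, whence $T(r,P_d(z,f))\le(n-1)T(r,f)+S(r,f)$. Comparing this with $nT(r,f)=T(r,f^n)=T\big(r,g-P_d(z,f)\big)$, where $g:=p_1e^{a_1z}+p_2e^{a_2z}$ has order exactly $1$ (since $a_j\neq0$ and $a_1\neq a_2$ under the hypothesis on $a_1/a_2$), gives $\rho(f)=1$ and $S(r,f)=o(r)$.

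\emph{Step 2 (auxiliary equation and Clunie reduction).} Since $g$ satisfies $g''-(a_1+a_2)g'+a_1a_2g\equiv0$ (and, when $a_2=-a_1$, also the first integral $(g')^2=a_1^2g^2-4a_1^2p_1p_2$), substituting $g=f^n+P_d(z,f)$, expanding the derivatives of $f^n$, and collecting the terms of $f$-degree $n$ yields an identity of the form
\[
f^{\,n-2}\,Q(z,f)=-\big(P_d''-(a_1+a_2)P_d'+a_1a_2P_d\big),
\]
where $Q(z,f)=f^2\big((\phi-a_1)(\phi-a_2)+\phi'\big)$ with $\phi:=nf'/f$, and the right-hand side is a differential--difference polynomial in $f$ of degree $\le d\le n-1$. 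If $d\le n-2$, the difference version of Clunie's lemma applies to this identity and gives $m(r,Q(z,f))=S(r,f)$; as $f$ is entire, $Q(z,f)$ is then a small function, so $\phi=nf'/f$ satisfies a Riccati-type equation $\phi'+(\phi-a_1)(\phi-a_2)=\beta/f^2$ with $T(r,\beta)=S(r,f)$. If $d=n-1$, I would first isolate the $f$-degree-$(n-1)$ block of $P_d(z,f)$ and absorb it — using $n$-th-power and difference-operator identities together with a change $f\mapsto f+(\text{suitable coefficient})$ — so as to pass to an equivalent equation whose polynomial part has $f$-degree $\le n-2$, while tracking the constant term in order to obtain the implication $P_d(z,0)\not\equiv0\Rightarrow\gamma_0\not\equiv0$ of conclusion (2).

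\emph{Step 3 (solving and matching coefficients).} Using entirety of $f$ together with the smallness of $\beta$ to constrain the zeros of $f$, one integrates the Riccati equation for $\phi$; combined with $\rho(f)=1$ this forces $f$ to be a finite linear combination $\sum_j\gamma_j(z)e^{\mu_jz}$ with distinct exponents $\mu_j$ and polynomial coefficients $\gamma_j$. Inserting this into \eqref{D} and comparing coefficients of the distinct exponentials — legitimate because the exponential monomials $z^ke^{\mu_jz}$ with distinct $\mu_j$ are linearly independent — one finds that the leading exponential of $f^n$ must match $p_1e^{a_1z}$ or $p_2e^{a_2z}$ and that every intermediate exponent of $f^n$ must be cancelled by $-P_d(z,f)$; this is possible only in one of two patterns: a pair of reciprocal exponents $\pm a_1/n$ together with a polynomial term, forcing $a_2=-a_1$, $\gamma_1^n=p_1$, $\gamma_2^n=p_2$ (conclusion (1)); or a single exponent $\beta$ with $n\beta\in\{a_1,a_2\}$ together with a polynomial term, forcing $a_1/a_2\in\{t/n,n/t\}$ with $1\le t\le n-1$ and $\gamma_1^n=p_1$ (resp.\ $\gamma_2^n=p_2$) (conclusion (2)). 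That $f$ is entire of order exactly $1$ is what excludes non-polynomial (rational or order-$0$ transcendental) coefficients $\gamma_j$.

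\emph{Main obstacle.} The genuinely delicate point is the case $d=n-1$: there Clunie's lemma is ``off by one'' — the forcing term has $f$-degree $n-1$ whereas the factored power is only $f^{n-2}$ — so $Q(z,f)$ need not be a small function, and in fact it is not, e.g.\ for $f=\gamma_0+\gamma_1e^{\beta z}$ with $\gamma_0\neq0$ and $t<n-1$. In the purely differential setting this is repaired by completing the $f^{n-1}$-term to an $n$-th power and translating $f$; in the difference setting the $f$-degree-$(n-1)$ block of $P_d(z,f)$ is a sum of products of \emph{shifts} of $f$ rather than a pure power, so the completion, and the ensuing bookkeeping of how the extra terms perturb the Riccati equation and the exponent matching, is the main work. (This is also why one cannot invoke Theorem~\ref{thA} directly: its hypothesis $N(r,1/g)=S(r,f)$ fails here, since the zeros of $g=p_1e^{a_1z}+p_2e^{a_2z}$ lie on an arithmetic progression and contribute a counting term of the same order of growth as $T(r,f)$.) A more routine secondary difficulty is excluding non-polynomial coefficients in the final representation.
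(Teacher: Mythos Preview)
This theorem is \emph{not} proved in the paper: it is quoted as Theorem~C from Liu--Mao \cite{lhf2021}, so there is no in-paper proof to compare your proposal against directly. What the paper does prove are the three-exponential analogues (Theorems 1.1--1.3), and it is against those arguments that your proposal can meaningfully be measured.

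Your Steps~1--2 line up well with the paper's machinery. The pole-chain argument is exactly Lemma~\ref{lm2.1} (which the paper states as an extension of \cite[Lemma~2.3]{lhf2021}); and your Clunie reduction via the constant-coefficient ODE annihilating $g$ is precisely the device behind the proof of Theorem~1.1, where the paper differentiates and eliminates exponentials to reach $f^{n-2}G=\text{(lower degree)}$ with $G$ your $Q(z,f)$. Where your sketch diverges is Step~3: the paper never tries to ``integrate the Riccati equation for $\phi$.'' Instead, once $T(r,G)=S(r,f)$, it extracts a second small auxiliary function $A$ from the simple-zero structure of $f$, rewrites $G$ as $q_1f^2+q_2ff'+q_3(f')^2$, and applies the algebraic identity of Lemma~\ref{lm2.2} to the discriminant $q_2^2-4q_1q_3$; each branch of that dichotomy is then pushed to an explicit scalar ODE for $f$ (e.g.\ $(2n-1)f''/f'=a_1+a_3$ or $a_1^2f=n^2f''$), which is solved directly. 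Your Riccati route may be salvageable, but as written the passage from ``$\beta$ small'' to ``$f$ is a finite exponential sum with polynomial coefficients'' is a hand-wave, not an argument.

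Your diagnosis of the $d=n-1$ case as the main obstacle is correct, but the proposed cure --- complete the degree-$(n-1)$ block to an $n$-th power via $f\mapsto f+c$ --- does not work in the difference setting, and you essentially say so yourself: the top block of $P_d$ is a sum of products of \emph{shifted} values $f(z+c_{i_1})\cdots f(z+c_{i_{n-1}})$, while $f\mapsto f+c$ only manufactures an \emph{unshifted} $ncf^{n-1}$, so no cancellation occurs. For its own $d=n-1$ result (Theorem~1.2) the paper abandons the Clunie route and instead shows directly that $\phi:=a_2f-nf'$ is small (using Lemma~\ref{c2.11} and the ratio hypotheses on $a_j$), then solves the linear ODE $nf'=a_2f-\phi$ via Lemma~\ref{lm3.5} together with Phragm\'en--Lindel\"of. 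If you want to handle $d=n-1$, you will need a mechanism of that kind rather than a degree-lowering substitution.
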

\end{renewcommand}

For the case of $P_d(z,f)\equiv 0$ in \eqref{D}, Liu and Mao obtained the following result.

\begin{renewcommand}{\thethm}{\Alph{thm} }
\begin{thm}[\cite{lhf2021}]
\label{thD}
Let $n\ge2$ be an integer,$~a_j$ be nonzero distinct constants,$~p_j(z)$ be nonzero meromorphic functions, $~j=1,2$. Then the following equation
\begin{align}
\label{E}
f^n(z)=p_1e^{a_1z}+p_2e^{a_2z}
\end{align}
can not admits a finite order meromorphic solution $f$ such that $T(r,p_j)=S(r,f),~j=1,2$.
\end{thm}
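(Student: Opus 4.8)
The plan is to argue by contradiction: assume equation \eqref{E} admits a finite order meromorphic solution $f$ with $T(r,p_j)=S(r,f)$ for $j=1,2$. First I would observe that $f$ can neither grow too fast nor have many poles. Since $T(r,f^n)=nT(r,f)$ and $T(r,e^{a_jz})=\tfrac{|a_j|}{\pi}r+O(1)$, \eqref{E} gives $nT(r,f)\le\tfrac{|a_1|+|a_2|}{\pi}\,r+S(r,f)$, whence $T(r,f)=O(r)$ and $S(r,f)=o(r)$. Also, since each $e^{a_jz}$ is entire and zero free, a pole of $f$ of order $m$ forces the right side of \eqref{E} to have a pole of order $nm$, which can only happen at a pole of $p_1$ or $p_2$; comparing pole orders yields $N(r,f)\le\tfrac1n\bigl(N(r,p_1)+N(r,p_2)\bigr)=S(r,f)$.

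Next I would control the zeros of $f$. Differentiating \eqref{E} and eliminating $e^{a_2z}$ between \eqref{E} and its derivative gives the identity
\[
f^{n-1}\bigl[(p_2'+a_2p_2)f-np_2f'\bigr]=V\,e^{a_1z},\qquad V:=(a_2-a_1)p_1p_2+p_1p_2'-p_1'p_2,
\]
where $T(r,V)=S(r,f)$. If $V\equiv 0$, then $(p_2/p_1)'=(a_1-a_2)(p_2/p_1)$, so $p_2=Cp_1e^{(a_1-a_2)z}$ for a nonzero constant $C$, and then $T(r,p_2)\ge T(r,e^{(a_1-a_2)z})-S(r,f)\ge\tfrac{|a_1-a_2|}{\pi}r-o(r)$, contradicting $T(r,p_2)=o(r)$ since $a_1\neq a_2$. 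If $V\not\equiv 0$, then at a zero $z_0$ of $f$ of order $m$ lying off the (small) zero and pole sets of $p_1,p_2,V$, the left side of the identity vanishes to order exactly $nm-1$ while the right side vanishes to the order of $V$ at $z_0$; hence $V$ has a zero of order $nm-1\ge m$ at $z_0$, and therefore $\bar N(r,1/f)\le\bar N(r,1/V)+S(r,f)=S(r,f)$.

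With the counting functions of $f$ under control I would finish by setting $F_1:=p_1e^{a_1z}/f^n$ and $F_2:=p_2e^{a_2z}/f^n$, so that $F_1,F_2\not\equiv0$ and $F_1+F_2=1$. Using $F_j'/F_j=p_j'/p_j+a_j-nf'/f$, the lemma on the logarithmic derivative together with $\bar N(r,f)+\bar N(r,1/f)=S(r,f)$ gives $T(r,F_j'/F_j)=S(r,f)$ for $j=1,2$. Differentiating $F_1+F_2=1$ yields $F_1\bigl(F_1'/F_1-F_2'/F_2\bigr)=-F_2'/F_2$. If $F_1'/F_1\equiv F_2'/F_2$, then $F_1/F_2$ is a nonzero constant, which makes $e^{(a_1-a_2)z}=\mathrm{const}\cdot p_2/p_1$ a small function of $f$ — impossible since $a_1\neq a_2$ and $S(r,f)=o(r)$. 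Otherwise $F_1=\dfrac{F_2'/F_2}{F_2'/F_2-F_1'/F_1}$, so $T(r,F_1)=S(r,f)$ and hence $T(r,F_1/F_2)=S(r,f)$; but $F_1/F_2=\dfrac{p_1}{p_2}\,e^{(a_1-a_2)z}$ gives $T(r,F_1/F_2)\ge T(r,e^{(a_1-a_2)z})-S(r,f)\ge\tfrac{|a_1-a_2|}{\pi}r-o(r)$. Since $a_1\neq a_2$, this contradiction proves the theorem.

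The step I expect to be the main obstacle is the zero count in the second paragraph: converting the heuristic ``every zero of $f$ is a high multiplicity zero of the small function $V$'' into the clean estimate $\bar N(r,1/f)=S(r,f)$ requires careful bookkeeping of multiplicities and of the exceptional zeros lying over the zeros and poles of the coefficients, together with the separate disposal of the degenerate case $V\equiv 0$. Once $N(r,f)$ and $\bar N(r,1/f)$ are known to be $S(r,f)$, the remaining manipulation of $F_1+F_2=1$ through the logarithmic derivative lemma is routine.
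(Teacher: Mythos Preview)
The statement you are proving is Theorem~D, quoted from Liu and Mao \cite{lhf2021}; the present paper does \emph{not} supply its own proof of this result but merely cites it as background. So there is, strictly speaking, no proof in the paper to compare yours against. Your argument is correct and complete as written: the pole bound $N(r,f)=S(r,f)$ is obtained by comparing pole orders across \eqref{E}; the elimination identity $f^{n-1}[(p_2'+a_2p_2)f-np_2f']=Ve^{a_1z}$ is derived correctly, the degenerate case $V\equiv0$ is properly disposed of, and the multiplicity count at a generic zero of $f$ gives $\bar N(r,1/f)\le N(r,1/V)+S(r,f)=S(r,f)$; finally the $F_1+F_2=1$ manoeuvre combined with $T(r,F_j'/F_j)=S(r,f)$ (which does follow from $\bar N(r,f)+\bar N(r,1/f)=S(r,f)$, since the logarithmic derivative has only simple poles) yields the contradiction cleanly.

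For context, the closest thing the paper does prove is the three-term analogue, Theorem~1.3, for $n\ge3$. There the method is different and worth contrasting with yours. The paper substitutes $g=fe^{-a_3z/n}$ to absorb one exponential, then differentiates and eliminates twice to reach an identity of the shape $g^{n-2}Q(z,g)=R$ with $Q$ a differential polynomial of degree~$2$ in $g$ and $R$ small; invoking $m(r,1/g)=S(r,g)$ from Lemma~\ref{lm2.8}(ii) gives $T(r,Q)=S(r,g)$, whence $T(r,g^{n-2})=S(r,g)$, a contradiction for $n\ge3$. That Clunie-style route is short but intrinsically loses the case $n=2$, since the factor $g^{n-2}$ becomes trivial. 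Your approach via $\bar N(r,1/f)=S(r,f)$ and the logarithmic-derivative trick on $F_1+F_2=1$ handles all $n\ge2$ uniformly, at the price of the extra zero-bookkeeping you already flagged; that bookkeeping is indeed the only delicate point, and you have carried it out correctly.
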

\end{renewcommand}

Recently, Chen et al.\cite{cmf2024} considered the right
sides of \eqref{D} and \eqref{E} with three linearly independent exponential terms and obtained the following result.

\begin{renewcommand}{\thethm}{\Alph{thm} }
\begin{thm}[\cite{cmf2024}]
\label{thE}
Let $n\ge3$ be an integer, $P_d(z,f)$ be a difference polynomial in $f$ of degree $d\le n-2$ with small functions of $f$ as its coefficients, and let $p_j,~a_j,~j=1,2,3$, be nonzero constants satisfying
$\frac{a_2}{a_1}\in \{ \frac{s}{n}:1\le s \le n-1\}$,  $\frac{a_3}{a_1}\in \{ \frac{t}{n}:1\le t \le n-1\}$, $s\not=t$.
If difference equation
\begin{align}
\label{F}
f^n(z)+P_d(z,f)=p_1e^{a_1z}+p_2e^{a_2z}+p_3e^{a_3z}
\end{align}
admits a finite order meromorphic solution $f$, then $f(z)=\gamma_1+\gamma_2e^{\frac{a_1z}{n}}$, where $\gamma_1$  is a constant, and $\gamma_2^n=p_1$. Moreover, if $P_d(z,0)\not\equiv 0$, then $\gamma_1\not\equiv 0$.
\end{thm}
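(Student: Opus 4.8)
Write $g:=p_{1}e^{a_{1}z}+p_{2}e^{a_{2}z}+p_{3}e^{a_{3}z}$ and, throughout, use the substitution $u:=e^{a_{1}z/n}$, so that $e^{a_{1}z}=u^{n}$, $e^{a_{2}z}=u^{s}$, $e^{a_{3}z}=u^{t}$ and the equation reads $f^{n}+P_{d}(z,f)=p_{1}u^{n}+p_{2}u^{s}+p_{3}u^{t}$. The plan has three parts: (I) reduce to an entire solution of order exactly $1$; (II) show that such a solution satisfies $nf'-a_{1}f\equiv\mathrm{const}$, i.e.\ $f=\gamma_{1}+\gamma_{2}e^{a_{1}z/n}$ with $\gamma_{1},\gamma_{2}$ constants; (III) determine the constants and derive the final statement about $P_{d}(z,0)$.

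\emph{Step I.} At a pole $z_{0}$ of $f$ of multiplicity $\mu$ lying over neither a zero nor a pole of a coefficient of $P_{d}$, the term $f^{n}$ has a pole of order $n\mu$ while $P_{d}(z,f)$ has one of order at most $d\mu\le(n-2)\mu<n\mu$, which is impossible since $g$ is entire; the usual bootstrap on pole multiplicities (using $n-d\ge2$) then shows $f$ is entire, so $m(r,f)=T(r,f)$. By the difference analogue of the Valiron--Mohon'ko theorem, $T(r,P_{d}(z,f))\le dT(r,f)+S(r,f)$, whence $nT(r,f)=T(r,f^{n})\le T(r,g)+T(r,P_{d}(z,f))+O(1)$ gives $(n-d)T(r,f)\le T(r,g)+S(r,f)$, and similarly $T(r,g)\le(n+d)T(r,f)+S(r,f)$; since $n-d\ge2$ and $T(r,g)\asymp r$, we get $T(r,f)\asymp r$, i.e.\ $\rho(f)=1$.

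\emph{Step II.} If $f$ has no zeros, then $\rho(f)=1$ forces $f=e^{\alpha z+\beta}$ for constants $\alpha\ne0$, $\beta$; substituting, every degree-$m$ monomial of $P_{d}$ becomes a small function times $e^{m\alpha z}$ with $0\le m\le d$, so the equation is an identity between two exponential polynomials whose frequency sets lie in $\{0,\alpha,\dots,n\alpha\}$ and $\{a_{1},a_{2},a_{3}\}$ respectively. By Borel's lemma (linear independence of $1,e^{\alpha z},\dots,e^{n\alpha z}$ over the field of small functions of $f$), matching the highest-frequency term forces $n\alpha=a_{1}$ --- the alternatives $n\alpha\in\{a_{2},a_{3}\}$ being excluded since $s,t\le n-1$ --- then $e^{n\beta}=p_{1}$, and matching the $e^{0}$-term forces $P_{d}(z,0)\equiv0$; this is the case $\gamma_{1}\equiv0$. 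If instead $f$ has at least one zero, I would apply the operators $\mathcal{L}_{j}h:=h'-a_{j}h$ for $j=1,2,3$ in succession: $\mathcal{L}_{1}$ annihilates $e^{a_{1}z}$ and carries $f^{n}$ to $f^{n-1}(nf'-a_{1}f)$ plus a difference polynomial of degree $\le n-2$; $\mathcal{L}_{2}\mathcal{L}_{1}$ then yields $f^{n-2}\psi$ (with $\psi$ a differential polynomial of degree $2$) plus a term of degree $\le n-2$; and $\mathcal{L}_{3}\mathcal{L}_{2}\mathcal{L}_{1}$ yields the identity $f^{n-3}\chi=-\mathcal{L}_{3}\mathcal{L}_{2}\mathcal{L}_{1}(P_{d}(z,f))$, where $\chi$ is a differential polynomial of degree $3$ and the right-hand side has degree $\le n-2$. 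If $nf'-a_{1}f\equiv0$ we are done (this gives $f=\gamma_{2}e^{a_{1}z/n}$); otherwise, since $f$ has a zero one checks that $\psi$ and $\chi$ cannot vanish identically, and I would combine the displayed identity with the Clunie lemma (the degree drop $d\le n-2$ is exactly what makes it applicable) and the order estimate of Step~I --- together with the asymptotic $f\sim\gamma_{2}e^{a_{1}z/n}$ that the equation forces in a suitable sector --- to show that $\psi,\chi$ are small functions of $f$ and, propagating this back through the operator chain, that $nf'-a_{1}f$ is constant. Hence $f=\gamma_{1}+\gamma_{2}e^{a_{1}z/n}$ with $\gamma_{1},\gamma_{2}$ constants.

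\emph{Step III.} Substituting $f=\gamma_{1}+\gamma_{2}u$ into $f^{n}+P_{d}(z,f)=p_{1}u^{n}+p_{2}u^{s}+p_{3}u^{t}$: the binomial expansion of $f^{n}$ is a polynomial in $u$ of degree $n$ with constant coefficients, while $P_{d}(z,\gamma_{1}+\gamma_{2}u)$ is a polynomial in $u$ of degree $\le d\le n-2$ with small-function coefficients; comparing the coefficients of $u^{n}$ (again by Borel's lemma, distinct powers of $u$ being linearly independent over small functions) gives $\gamma_{2}^{n}=p_{1}$. Finally, if $\gamma_{1}\equiv0$ then $P_{d}(z,\gamma_{2}u)$ has vanishing $u^{0}$-coefficient, because the right-hand side has none ($s,t\ge1$); that coefficient is $P_{d}(z,0)$, so $P_{d}(z,0)\equiv0$. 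Equivalently, $P_{d}(z,0)\not\equiv0$ forces $\gamma_{1}\not\equiv0$, which finishes the proof.

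\emph{Expected main obstacle.} The crux is Step~II. One cannot invoke the classical Tumura-Clunie theorem (Theorem~\ref{thA}): a genuine three-term exponential polynomial with $n\ge3$ has $N(r,1/g)\asymp r$, which is \emph{not} $S(r,f)$ --- writing $g=u^{\min(s,t)}H(u)$ with $H$ a polynomial of positive degree and nonzero constant term exhibits a positive density of zeros of $g$ --- and in fact the conclusion $g=(f+\gamma)^{n}$ of that theorem would here force $p_{2}=p_{3}=0$. So the shape of $f$ has to be extracted from the commensurability $a_{2}/a_{1}=s/n$, $a_{3}/a_{1}=t/n$ together with the \emph{strict} degree drop $d\le n-2$, and making the operator-reduction-and-Clunie argument actually collapse $f$ to a single exponential plus a constant is where the real work lies.
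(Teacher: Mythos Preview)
The statement is quoted from \cite{cmf2024} and not re-proved here; the paper's own proofs of Theorems~1.1 and~1.2 are the relevant comparison. Your Steps~I and~III are correct and match the paper's treatment (Lemma~\ref{lm2.1} for entirety and $T(r,f)\asymp r$; Lemma~\ref{borel} for reading off $\gamma_2^n=p_1$ and the claim about $P_d(z,0)$). The gap is in Step~II.

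After applying all three operators you obtain $f^{n-3}\chi=-\mathcal L_3\mathcal L_2\mathcal L_1(P_d)$, whose right side is a differential--difference polynomial in $f$ of degree $\le d\le n-2$. The Clunie lemma (Lemma~\ref{Clunie}) requires the degree of the right side to be at most the exponent of $f$ on the left, namely $n-3$; since $d$ may equal $n-2$ this fails by one, so your assertion that ``$d\le n-2$ is exactly what makes it applicable'' is off by a differentiation. And even if one had $T(r,\chi)=S(r,f)$ or $T(r,\psi)=S(r,f)$, the phrase ``propagating this back through the operator chain'' to reach $nf'-a_1f=\mathrm{const}$ is where all the work lies and is not supplied.

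What the paper (and \cite{cmf2024}) does instead is to stop after \emph{one} elimination and exploit the ratio hypotheses as proximity estimates rather than killing every exponential. One differentiates once and removes only $e^{a_1z}$, obtaining
\[
f^{n-1}(a_1f-nf')+(a_1P_d-P_d')=(a_1-a_2)p_2e^{a_2z}+(a_1-a_3)p_3e^{a_3z}.
\]
Dividing by $f^{n-1}$ and using Lemma~\ref{c2.11} (which gives $m(r,1/f)=S(r,f)$ and $m(r,e^{a_1z}/f^n)=S(r,f)$) together with the pointwise bound $|e^{a_jz}|/|f|^{n-1}\le(|e^{a_1z}|/|f|^n)^{s/n}\,|f|^{-(n-1-s)}$ on $\{|e^{a_1z}|>1\}$ (here is where $a_j/a_1=s/n$ with $s\le n-1$ is used) yields $m(r,a_1f-nf')=S(r,f)$. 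From this one either integrates the first-order ODE via Lemma~\ref{lm3.5} and Phragm\'en--Lindel\"of (as in the proof of Theorem~1.2), or differentiates once more to reach a quadratic $G$ in $f,f',f''$ with $T(r,G)=S(r,f)$ and analyzes it through Li's identity (Lemma~\ref{lm2.2}), as in the proof of Theorem~1.1. Your sectorial-asymptotic remark is in the right spirit, but it needs the smallness of $a_1f-nf'$ as input, which the triple-elimination route does not provide.
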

\end{renewcommand}

If $n=2,~P_d(z,f)\equiv 0$ in \eqref{F}, then the following result was shown.
\begin{renewcommand}{\thethm}{\Alph{thm} }
\begin{thm}[\cite{cmf2024}]
\label{thF}
Let  $~a_j,~j=1,2,3$, be nonzero distinct constants,$~p_j(z),~j=1,2,3$, be nonzero meromorphic functions. If the following equation
\begin{align}
\label{G}
f^2(z)=p_1e^{a_1z}+p_2e^{a_2z}+p_3e^{a_3z}
\end{align}
 admits a finite order meromorphic solution $f$ such that $T(r,p_j)=S(r,f),~j=1,2,3$, then $f(z)=\gamma_ie^{\frac{a_iz}{2}}+\gamma_je^{\frac{a_jz}{2}}$, where $\gamma_i^2=p_i$, $\gamma_j^2=p_j$, $2\gamma_i\gamma_j=p_k$, $a_i+a_j=2a_k$, $\{i,j,k\}=\{1,2,3\}$.
\end{thm}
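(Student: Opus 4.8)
\emph{Proof plan.} Put $h_j=p_je^{a_jz}$ and $g=h_1+h_2+h_3$, so that $f^2=g$ and $h_j'/h_j=\alpha_j:=a_j+p_j'/p_j$ is a nonzero constant plus a small function of $f$. Since the $a_j$ are distinct and $T(r,p_j)=S(r,f)$, a Borel‑type estimate gives $T(r,g)=(c+o(1))r$ with $c>0$, so $\rho(f)=\rho(g)=1$, while $2N(r,f)=N(r,g)\le\sum_jN(r,p_j)=S(r,f)$, i.e.\ $N(r,f)=S(r,f)$. Differentiating $f^2=h_1+h_2+h_3$ twice produces a linear system for $(h_1,h_2,h_3)$ whose coefficient matrix has rows $(1,1,1)$, $(\alpha_1,\alpha_2,\alpha_3)$, $(\beta_1,\beta_2,\beta_3)$ with $\beta_j=h_j''/h_j=a_j^2+(\text{small})$; its determinant $D$ equals the Vandermonde product $\prod_{i<j}(a_j-a_i)\ne0$ plus a small function, hence $D\not\equiv0$ and $T(r,D)=S(r,f)$. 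Cramer's rule then writes each $u_j:=h_j/f^2$ as a combination of $f'/f$, $f''/f$ and small functions, so $m(r,u_j)=S(r,f)$ by the logarithmic derivative lemma, and of course $u_1+u_2+u_3=1$.

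I would next pin down the zeros of $f$. Every pole of each $u_j$ lies at a zero of $f$ (away from the $S(r,f)$‑many zeros and poles of the $p_i$), so $T(r,u_j)=N(r,u_j)+S(r,f)\le 2N(r,1/f)+S(r,f)$; combined with $u_i/u_j=\tfrac{p_i}{p_j}e^{(a_i-a_j)z}$, which has characteristic $\asymp r$, this forces $N(r,1/f)\asymp r$. On the other hand, if $f$ had a zero of order $\ge2$ at a generic $z_0$, then $g(z_0)=g'(z_0)=g''(z_0)=0$, so $D(z_0)=0$; since $N(r,1/D)=S(r,f)$ this gives $N(r,1/f)=\overline N(r,1/f)+S(r,f)$, i.e.\ $f$ has essentially only simple zeros.

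The heart of the proof is to exploit that $f=\sqrt g$, not merely $f^2$, is single valued, so that \emph{every} zero of $g$ has even multiplicity (in particular, multiplicity $\ge2$). At each zero $z_0$ of $g$ outside the exceptional set, the two equations $g(z_0)=g'(z_0)=0$ force $(h_1:h_2:h_3)(z_0)=(\alpha_2-\alpha_3:\alpha_3-\alpha_1:\alpha_1-\alpha_2)(z_0)$, so $z_0$ is a common zero of the three order‑one functions $\tfrac{p_i}{p_j}e^{(a_i-a_j)z}-\sigma_{ij}$, where each $\sigma_{ij}$ is a quotient of $\alpha$‑differences and hence a small function. Since these three functions must then share $\asymp r$ of their zeros, I would deduce — by careful comparison of counting functions, most cleanly by using that $\Psi:=(g')^2/g$ has $N(r,\Psi)=S(r,f)$ (precisely because all zeros of $g$ are multiple) and expanding the identity $\Psi g=(g')^2$ into exponential monomials $h_ih_j$ so as to apply Borel's theorem, together with a short case analysis of which $a_\ell$ is the arithmetic mean of the other two — that for a suitable labelling $\{i,j,k\}=\{1,2,3\}$,
\[
a_i+a_j=2a_k \qquad\text{and}\qquad h_k^2=4h_ih_j,\quad\text{i.e.}\ p_k^2=4p_ip_j.
\]
This extraction is the step I expect to be the main obstacle; everything before and after it is routine once $f^2,(f^2)',(f^2)''$ have been used.

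Finally, granting the two displayed relations, a direct computation gives $4p_if^2=\bigl(2p_ie^{a_iz/2}+p_ke^{a_jz/2}\bigr)^2$, so that $\gamma_i:=(2p_ie^{a_iz/2}+p_ke^{a_jz/2})/(2f)$ is a meromorphic function with $\gamma_i^2=p_i$ and $T(r,\gamma_i)=\tfrac12T(r,p_i)+O(1)=S(r,f)$; putting $\gamma_j:=p_k/(2\gamma_i)$ one checks $\gamma_j^2=p_j$, $2\gamma_i\gamma_j=p_k$ and $f=\gamma_ie^{a_iz/2}+\gamma_je^{a_jz/2}$, which is exactly the conclusion of the theorem.
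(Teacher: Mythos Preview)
The paper does not prove this statement: Theorem~\ref{thF} is quoted verbatim from Chen--Gao--Huang \cite{cmf2024} as background (note the citation in the theorem header), and no argument for it appears anywhere in the present paper. So there is no ``paper's own proof'' to compare your plan against.

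On the plan itself, the preliminary reductions (order $1$, $N(r,f)=S(r,f)$, the Cramer--rule expression of $u_j=h_j/f^2$ via $f'/f,f''/f$, and the simplicity of generic zeros of $f$) are standard and fine. The problem is the step you yourself flag as the main obstacle. Because $g=f^2$, your auxiliary function is exactly
\[
\Psi=\frac{(g')^2}{g}=\frac{(2ff')^2}{f^2}=4(f')^2,
\]
so the ``identity'' $\Psi g=(g')^2$ is the tautology $4(f')^2f^2=(2ff')^2$ and contains no information beyond what you started with. If instead you expand $4(f')^2\sum_j h_j=\bigl(\sum_j \alpha_j h_j\bigr)^2$ into exponential monomials and try to invoke a Borel-type lemma, the left-hand coefficients $4(f')^2p_j$ have characteristic $\asymp r$, not $o(r)$, so hypothesis~(iii) of Lemma~\ref{borel} fails and nothing about the $a_j$ can be concluded. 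Likewise the observation that each $F_{ij}=(p_i/p_j)e^{(a_i-a_j)z}-\sigma_{ij}$ vanishes on the (order-$r$) zero set of $f$ is consistent with $T(r,F_{ij})\asymp r$ and does not force $F_{ij}\equiv 0$ or any arithmetic relation among the $a_j$. As written, the argument stops here.

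A route that does go through, in the spirit of the present paper's proofs of Theorems~1.1 and~1.3, is to eliminate two exponentials by differentiation to reach an identity of the shape
\[
a_ia_j f^2-2(a_i+a_j)ff'+2(f')^2+2ff''+(\text{small})=(\text{small})\cdot e^{a_k z},
\]
show the left side is small via a Clunie-type estimate, and then use Lemma~\ref{lm2.2} (the $af^2+bff'+c(f')^2=d$ identity) to force $a_i+a_j=2a_k$ and a first-order ODE for $f$. Your Cramer setup is morally the same elimination; to finish you should analyse the resulting quadratic in $f,f'$ directly rather than pass through $\Psi$.
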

\end{renewcommand}

For Theorem \ref{thE}, we find that there exists an equation
$f^4(z)+f^2(z+i\pi)+f(z+2i\pi)=2e^{z}+16e^{4z}+4e^{2z}
$  which has a solution $f(z)=2e^z$. Here, $n=4$, $d=2$, and $a_1=1,~a_2=4,~a_3=2$  do not satisfy the conditions $\frac{1}{n}\le\frac{a_2}{a_1}\le\frac{n-1}{n}$ and $\frac{1}{n}\le\frac{a_3}{a_1}\le\frac{n-1}{n}$. Therefore, it is natural to ask the following question.

\begin{Que} What can be said about the solutions of the equation \eqref{F} when the restricted conditions on $a_j$ are weakened, for $ j=1,2,3$?
\end{Que}

By observing the conditions on $n,~d$ in Theorem \ref{thE} and  Theorem \ref{thF}, we can ask the following two questions.

\begin{Que}
If $d=n-1$, does  Theorem \ref{thE} still holds ?
\end{Que}

\begin{Que}
 If the integer $n>2$, does the equation  $f^n(z)=p_1e^{a_1z}+p_2e^{a_2z}+p_3e^{a_3z}$ has meromorphic solutions  $f$ such that $T(r,p_j)=S(r,f)$ for $j=1,2,3$ ?
\end{Que}

In this paper, we mainly consider the above three questions. First of all,  we obtain the following result to answer Question 1.

\setcounter{thmx}{0}
\renewcommand{\thethmx}{\arabic{section}.\arabic{thmx}}
\begin{thmx}
Let $n\ge3$ be an integer, $P_d(z,f)\not\equiv 0$ be a difference polynomial in $f$ of degree $d\le n-2$ with small functions of $f$ as its coefficients, and let $p_j,~a_j,~j=1,2,3$, be non-zero constants such that $0<\frac{a_2}{a_1}=t\le\frac{n-2}{n}$.
If difference equation \eqref{F}
admits a meromorphic solution $f$ with $\rho_2(f)<1$,  then one of the following conclusions holds.
\begin{itemize}
    \item [(1)]$f(z)=\gamma_1e^{\frac{a_1z}{n}}$, $~\frac{a_1}{a_2}=\frac{n}{d_1},~\frac{a_1}{a_3}=\frac{n}{d_2},~\gamma_1^n=p_1,~P_d(z,f)=p_2e^{a_2z}+p_3e^{a_3z}$, or\\    $f(z)=\gamma_3e^{\frac{a_3z}{n}}$,$~\frac{a_3}{a_2}=\frac{n}{d_3},~\frac{a_3}{a_1}=\frac{n}{d_4},~\gamma_3^n=p_3,~P_d(z,f)=p_2e^{a_2z}+p_1e^{a_1z},$ where $d_i\in\{1,\dots,d\}$ are integers, $ i=1,2,3,4$.
    \item [(2)]$f(z)=c_1e^{\frac{a_1z}{n}}+c_0$, $\frac{a_3}{a_1}=\frac{n-1}{n}$, where $c_0\not=0$ is a constant, $c_1^n=p_1,$ $-c_0^n$ equals the constant term of $P_d(z,c_1e^{\frac{a_1z}{n}}+c_0)$, or\\
 $f(z)=c_2e^{\frac{a_3z}{n}}+c_3$,  $\frac{a_1}{a_3}=\frac{n-1}{n}$, where $c_3\not=0$ is a constant, $c_2^n=p_3,$ $-c_3^n$ equals the constant term of $P_d(z,c_2e^{\frac{a_3z}{n}}+c_3)$.
    \item [(3)] $f(z)=t_1e^{\frac{a_1z}{n}}+t_3e^{\frac{a_3z}{n}}$, $a_1+a_3=0$, where $t_1^n=p_1,~t_3^n=p_3$.
\end{itemize}
\end{thmx}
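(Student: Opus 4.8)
The plan is to first control the growth and the poles of $f$, then to peel the three exponential terms off the right-hand side of \eqref{F} one at a time by means of first-order differential operators, and finally to substitute a short exponential-polynomial ansatz back into \eqref{F} and match frequencies and coefficients. Throughout write $g(z)=p_1e^{a_1z}+p_2e^{a_2z}+p_3e^{a_3z}$, and note that since $a_2/a_1=t\notin\{0,1\}$ the entire function $g$ carries at least two distinct frequencies, so $T(r,g)\asymp r$.

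\emph{Step 1 (growth and poles).} Because $\rho_2(f)<1$, the difference analogue of the lemma on the logarithmic derivative (\cite{chiang,halburd2006}) gives $m\bigl(r,f(z+c)/f(z)\bigr)=S(r,f)$ for every constant $c$; hence $m(r,P_d)\le d\,m(r,f)+S(r,f)$ and $T(r,P_d)\le(n-2)T(r,f)+S(r,f)$. From $f^n=g-P_d$ together with $d\le n-2<n$, a Clunie-type estimate yields $(n-d)T(r,f)\le T(r,g)+S(r,f)=O(r)$, so $\rho(f)\le1$; combining this with $T(r,g)\le T(r,f^n)+T(r,P_d)+S(r,f)\le(2n-2)T(r,f)+S(r,f)$ and $T(r,g)\asymp r$ forces $T(r,f)\asymp r$ and $\rho(f)=1$. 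Since $g$ is entire, a pole of $f$ of order $m$ at $z_0$ would produce a pole of order $nm$ of $P_d$ at $z_0$, which can only arise from poles of $f$ at the shifted points $z_0+c_{ij}$; summing over all poles and using the shift estimates available for $\rho_2(f)<1$ gives $nN(r,f)\le dN(r,f)+S(r,f)$, whence $N(r,f)=S(r,f)$, and a short analysis of the resulting pole chains shows that in fact $f$ is entire. (In the differential--difference version $N(r,f)=S(r,f)$ is the standing hypothesis and replaces this computation.)

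\emph{Step 2 (isolating one exponential).} For distinct $a_i,a_j$ set $D_i:=\frac{d}{dz}-a_i$. A direct computation gives the identity
\[
D_iD_j\bigl[f^n\bigr]=n\,f^{\,n-2}R_{ij},\qquad R_{ij}:=(n-1)(f')^2+ff''-(a_i+a_j)ff'+\tfrac{a_ia_j}{n}\,f^2,
\]
where $R_{ij}$ is a differential polynomial in $f$ of degree $2$ with $m(r,R_{ij}/f^2)=S(r,f)$. Since $D_iD_j$ annihilates both $e^{a_iz}$ and $e^{a_jz}$, applying $D_2D_3$ to \eqref{F} (and, symmetrically, $D_1D_3$ and $D_1D_2$) leaves a single exponential on the right:
\[
n\,f^{\,n-2}R_{23}+D_2D_3[P_d]=(a_1-a_2)(a_1-a_3)\,p_1e^{a_1z},
\]
together with the two analogues for $p_2e^{a_2z}$ and $p_3e^{a_3z}$; here each $D_iD_j[P_d]$ is again a differential--difference polynomial in $f$ of degree $\le d\le n-2$. (If $a_1=a_3$ or $a_2=a_3$ the operator degenerates to first order, $D_j[f^n]=f^{\,n-1}(nf'-a_jf)$, and the same mechanism applies.) The point of this reduction is that the right-hand side is now zero-free.

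\emph{Step 3 (the form of $f$ -- the crux).} Dividing the displayed identity by $f^{\,n-2}$ and comparing orders and types of the three sides -- the left-hand side has proximity function $\le 2T(r,f)+S(r,f)$, while the zero-free right-hand side forces a matching between $f^{\,n-2}$ and $e^{a_1z}$ modulo the strictly lower-degree perturbation $D_2D_3[P_d]$ -- and invoking a Tumura--Clunie/Borel-type argument for exponential sums, one deduces that $f^n$, hence $f$, is an exponential polynomial of order $1$ whose frequencies are rational multiples of $a_1/n$, $a_2/n$, $a_3/n$. The hypothesis $0<a_2/a_1=t\le\frac{n-2}{n}$ is used precisely here: in the direction $\arg z=-\arg a_1$ the term $e^{a_2z}$ is strictly dominated by $e^{a_1z}$, so $a_2$ is never the leading frequency of $f^n$; and the degree bound $\deg P_d\le n-2$ shows that $e^{a_2z}$ can only be produced by $P_d$ or by a non-leading multinomial term of $f^n$. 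Examining the indicator function of $f$ in every direction then leaves at most two nonconstant frequencies, lying in $\{a_1/n,a_3/n\}$, together with a possible constant; that is, $f(z)=c_0+c_1e^{\beta_1z}+c_3e^{\beta_3z}$ with $\{\beta_1,\beta_3\}\subseteq\{a_1/n,a_3/n\}$ and small-function coefficients (some possibly vanishing). Since $a_3$ is not constrained by hypothesis, this step splits into the sub-cases $a_3\in\mathbb{R}a_1$ and $a_3\notin\mathbb{R}a_1$, the latter being where one must exclude genuinely two-dimensional exponential behaviour; this is the main obstacle, and it is handled by combining the growth bound $T(r,P_d)\le(n-2)T(r,f)+S(r,f)$, the isolation identities of Step 2, and a direction-by-direction comparison of indicators.

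\emph{Step 4 (the three alternatives).} Substitute $f(z)=c_0+c_1e^{\beta_1z}+c_3e^{\beta_3z}$ into \eqref{F}, expand $f^n$ by the multinomial theorem, and compare frequencies and the corresponding coefficients on the two sides, using once more that $\deg P_d\le n-2$ prevents $P_d$ from contributing the top frequency of $f^n$. Matching the highest frequency gives $c_1^n=p_1$ (or $c_3^n=p_3$), which already forces $c_1$ (resp.\ $c_3$) to be constant; matching $p_2e^{a_2z}$ against the remaining multinomial frequencies of $f^n$ and the frequencies produced by $P_d$ yields the arithmetic conditions $a_1/a_2=n/d_1$, $a_1/a_3=n/d_2$ with integers $d_i\le d$, or $a_3/a_1=(n-1)/n$, or $a_1+a_3=0$; and matching the constant terms gives, respectively, $c_0=0$; $c_0$ constant with $-c_0^n$ equal to the constant term of $P_d(z,f)$ (constancy of $c_0$ being forced by the coefficient relation $nc_1^{n-1}c_0=p_3$); and $c_0=0$ again. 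Together with the $a_1\leftrightarrow a_3$ symmetric possibilities, these are exactly conclusions (1), (2) and (3).
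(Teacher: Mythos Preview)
Your Steps 1--2 are fine in outline and indeed match the paper's setup (the operators $D_iD_j$ you write down are exactly the paper's double differentiation-and-elimination, and your $nR_{13}$ is the paper's quantity $G$). The gap is Step 3, which is the entire content of the theorem. You assert that a ``Tumura--Clunie/Borel-type argument for exponential sums'' together with ``examining the indicator function of $f$ in every direction'' forces $f$ to be an exponential polynomial with frequencies in $\{0,a_1/n,a_3/n\}$, but nothing you have written establishes this: an entire function with $T(r,f)\asymp r$ need not be an exponential polynomial, and the identity $nf^{\,n-2}R_{23}+D_2D_3[P_d]=Ce^{a_1z}$ is not in the form $f^n+\text{(lower degree)}=\text{(zero-free)}$ to which Tumura--Clunie applies. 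Your claimed use of the hypothesis $a_2/a_1\le(n-2)/n$ (``$e^{a_2z}$ is dominated by $e^{a_1z}$ in one direction, so it is never the leading frequency'') is not what is actually needed.

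Here is what the paper does instead, and it is genuinely different in mechanism. One isolates $e^{a_2z}$ (not $e^{a_1z}$) on the right, obtaining
\[
f^{\,n-2}G+H_3=(a_3-a_2)(a_1-a_2)p_2e^{a_2z},\qquad G=nR_{13},
\]
with $H_3$ of degree $\le n-2$. The hypothesis $0<a_2/a_1=t\le(n-2)/n$ is used \emph{quantitatively}: from Lemma \ref{c2.11} one has $m(r,e^{a_1z}/f^n)=S(r,f)$ and $m(r,1/f)=S(r,f)$, and since $|e^{a_2z}|/|f|^{n-2}\le(|e^{a_1z}|/|f|^n)^{(n-2)/n}$ on $\{|e^{a_1z}|>1\}$ and $\le 1/|f|^{n-2}$ on the complement, one gets $m(r,e^{a_2z}/f^{\,n-2})=S(r,f)$, hence $T(r,G)=S(r,f)$. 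If $G\equiv0$ one solves the resulting ODE for $P_d$ and reaches conclusion (1). If $G\not\equiv0$, the paper observes that simple zeros of $f$ must be zeros of a specific combination of $G,G'$ and $f',f''$, producing another small function $A$; eliminating $f''$ with $A$ turns the definition of $G$ into
\[
q_1f^2+q_2ff'+q_3(f')^2=G
\]
with small coefficients $q_i$, to which Li's lemma (Lemma \ref{lm2.2}) applies. A careful case split on the discriminant $q_2^2-4q_1q_3$ then yields: if it vanishes identically, $G$ is constant and one integrates a first-order linear ODE for $f'/f$ to get $f=c_1e^{(a_1+a_3)z/(2n-1)}+c_0$, leading to conclusion (2); if not, the identity forced by Lemma \ref{lm2.2} implies $a_1+a_3=0$, after which a product trick reduces to $a_1^2f^2-n^2(f')^2=\Omega$ with $\Omega$ small, and Lemma \ref{ccy2004} gives conclusion (3). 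None of this machinery is hinted at in your Step 3; as written, your argument assumes the conclusion.
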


\begin{rem}
If $P_d(z,f)$ is a differential-difference polynomial, then under the supplementary condition  $N(r,f)=S(r,f)$, by applying the proof method of Theorem 1.1, the conclusion of Theorem 1.1 still holds.
\end{rem}

\begin{rem} Theorem E requires  $\frac{a_2}{a_1}\in[\frac{1}{n},\frac{n-1}{n}]$,  $\frac{a_3}{a_1}\in[\frac{1}{n},\frac{n-1}{n}]$.  However, our Theorem 1.1 only needs  $ \frac{a_2}{a_1}\in(0,\frac{n-2}{n}]$. Therefore, the restricted condition on $a_j$ in Theorem 1.1 weaker than  Theorem \ref{thE}, $j=1,2,3$. In addition, Theorem 1.1  relaxes the restricted condition $\rho(f)<\infty$ in Theorem \ref{thE} into $\rho_2(f)<1$.
\end{rem}

For Question 2, we have the following result.

\begin{thmx}
Let $n\ge3$ be an integer, $P_d(z,f)\not\equiv 0$ be a difference polynomial in $f$ of degree $d= n-1$ with small entire function of $f$ as its coefficients, and let $p_j,~a_j,~j=1,2,3,$ be nonzero constants satisfying
$\frac{a_1}{a_2}\in \{ \frac{s}{n}:1\le s \le n-1\}$,  $\frac{a_3}{a_2}\in \{ \frac{t}{n}:1\le t \le n-1\}$, $s\not=t$.
If difference equation \eqref{F}
%\begin{align}
%\label{th1.2}
%f^n(z)+P_d(z,f)=p_1e^{a_1z}+p_2e^{a_2z}+p_3e^{a_3z}
%\end{align}
admits a meromorphic solution $f$ with $\rho_2(f)<1$, then $f(z)=\gamma_1+\gamma_2e^{\frac{a_2z}{n}}$, where $\gamma_2^n=p_2$,  $\gamma_1$ is a small entire function of $f$.
\end{thmx}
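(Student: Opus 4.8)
\emph{Setup and strategy.} The plan is to mimic the Tumura--Clunie machinery used for Theorem~\ref{thE}, but to exploit carefully the two hypotheses that now differ: the degree is exactly $d=n-1$ (rather than $\le n-2$) and the coefficients of $P_d$ and the putative $\gamma_1$ are \emph{entire} small functions, so that poles of $f$ are under control. Write $g(z)=p_1e^{a_1z}+p_2e^{a_2z}+p_3e^{a_3z}$ for the right-hand side. Since $\rho_2(f)<1$, the difference logarithmic derivative lemma of Halburd--Korhonen--Tohge applies: $m\bigl(r,f(z+c)/f(z)\bigr)=S(r,f)$, and more generally every shift $f(z+c_{i_j})$ contributes only $S(r,f)$ to proximity estimates relative to $f$. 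First I would show $T(r,g)\sim \max_j|a_j|\,r/\pi$ and hence, comparing with $f^n$, that $\rho(f)\le 1$ and $T(r,f^n)=T(r,g)+S(r,f)$, so in particular $n\,T(r,f)=T(r,g)+S(r,f)$ and all three exponential coefficients $p_je^{a_jz}$ are ``large'' in the sense $T(r,p_je^{a_jz})\asymp T(r,f)$ unless they cancel.

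\emph{Reducing to the Tumura--Clunie form.} Next I would verify the hypotheses of the Tumura--Clunie-type lemma for difference polynomials (the analogue of Theorem~\ref{thA} in the difference/entire-coefficient setting, which appears in the cited works of Yang--Laine and Liu--Mao). The key point is $N(r,f)=S(r,f)$: because the coefficients of $P_d$ are entire and $g$ is entire, a pole of $f$ of order $k$ would force the left side $f^n+P_d(z,f)$ to have a pole of order exactly $nk$ at that point (the difference terms $f^{(\nu)}(z+c)$ have poles at shifted points, which generically do not coincide; one argues via a counting/valence comparison that the top-degree term $f^n$ dominates), contradicting that $g$ is entire — so $f$ is entire, or at worst $N(r,f)=S(r,f)$. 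Then the lemma yields
\[
g(z)=\bigl(f(z)+\gamma(z)\bigr)^n
\]
for some small \emph{entire} function $\gamma$ of $f$. Writing $h=f+\gamma$, we get $h^n=g=p_1e^{a_1z}+p_2e^{a_2z}+p_3e^{a_3z}$, so the problem is reduced to: \emph{describe the entire $h$ with $\rho_2(h)<1$ solving $h^n=p_1e^{a_1z}+p_2e^{a_2z}+p_3e^{a_3z}$ under the ratio conditions on the $a_j$}.

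\emph{Solving the reduced equation.} For this step I would differentiate logarithmically: $n h'/h = g'/g$, so $g' h = n h' g$, i.e. $h$ divides $g'$ in the sense that $g'/g = n h'/h$ has no poles where $h\neq 0$; more usefully, $\bigl(h^n\bigr)' = g'$ gives $n h^{n-1}h' = g'$. Since $g$ and $g'$ are exponential sums with frequencies among $\{a_1,a_2,a_3\}$ (and $g'$ has the \emph{same} frequency set because all $a_j\neq0$), and $h^{n-1} = g/h$, one deduces that $h$ itself must be an exponential sum whose $n$-th power reproduces exactly three frequencies. Using the frequency conditions $a_1/a_2=s/n$, $a_3/a_2=t/n$ with $1\le s,t\le n-1$, $s\neq t$, the candidate is $h(z)=\gamma_2 e^{a_2z/n}+\gamma_1$ (a two-term exponential sum with the ``base'' frequency $a_2/n$ and a constant term). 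Expanding $h^n$ by the binomial theorem, the frequencies appearing are $\{k a_2/n : 0\le k\le n\}$; matching the three surviving ones with $a_1=s a_2/n$, $a_3=t a_2/n$, and the leading term $a_2$ (i.e. $k=n$) forces $\gamma_2^n=p_2$ and pins down which binomial coefficients survive, which in turn forces $\gamma_1$ to be (a small entire function, in fact) constant or appropriately constrained; all other binomial terms must vanish, which is consistent precisely because only three frequencies are allowed on the right. I would then check that $\gamma_1$ cannot be eliminated in general (it absorbs $\gamma$), giving $f=h-\gamma=\gamma_2 e^{a_2z/n}+(\gamma_1-\gamma)$, and relabel $\gamma_1-\gamma$ as the small entire function $\gamma_1$ of the statement, with $\gamma_2^n=p_2$.

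\emph{Main obstacle.} The delicate point is the passage $h^n=g\ \Rightarrow\ h$ is a two-term exponential sum: a priori $h$ could be any entire function of order $\le 1$, and one must rule out, for instance, that $h$ is a genuinely transcendental-coefficient combination or has extra frequencies that cancel in $h^n$. The clean way is a Borel/Steinmetz-type argument on exponential sums: from $h^n = \sum_{j=1}^3 p_j e^{a_jz}$ and the fact (established above) that $h$ has order $\le 1$ with $N(r,1/h)$ controlled (zeros of $h$ are zeros of $g$, an exponential sum with three terms, hence $h$ has at most ``one exponential's worth'' of zeros), one writes $h = q(z)e^{\beta z}\prod(\dots)$ and shows via Nevanlinna counting that $h$ must be of the form $\alpha(z)e^{\beta z}+\delta(z)$ with $\alpha,\delta$ small; then $h^n$ expands and a frequency count against exactly three terms forces $\delta$ small relative to $\alpha$ and $\beta=a_2/n$. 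Getting this cancellation analysis airtight — especially handling the sub-case where two of $a_1,a_2,a_3$ could coincide modulo the binomial frequencies, and confirming $s\ne t$ prevents degeneration to only two terms on the right — is where the real work lies; the entire-coefficient hypothesis is what makes the pole-free reduction and the Borel-type argument go through without extra error terms.
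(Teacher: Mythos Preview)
Your reduction to the Tumura--Clunie form is where the argument breaks. Theorem~\ref{thA} (and its difference analogues) requires $N(r,f)+N\bigl(r,\tfrac{1}{g}\bigr)=S(r,f)$, but for the three-term exponential sum $g(z)=p_1e^{a_1z}+p_2e^{a_2z}+p_3e^{a_3z}$ the zero counting $N\bigl(r,\tfrac{1}{g}\bigr)$ is comparable to $T(r,g)\asymp r$, not $S(r,f)$. (Concretely, $e^{a_1z}+e^{a_2z}+e^{a_3z}$ vanishes on an arithmetic progression whenever one factors out the smallest exponential.) So you are not entitled to conclude $g=(f+\gamma)^n$. Worse, that conclusion is outright false here: if it held, the entire function $h=f+\gamma$ would satisfy $h^n=p_1e^{a_1z}+p_2e^{a_2z}+p_3e^{a_3z}$ with small constant coefficients, which Theorem~1.3 of this very paper shows is impossible for $n\ge 3$. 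Your frequency-matching step repeats the same mistake: for nonzero $\gamma_1,\gamma_2$ the expansion of $(\gamma_2 e^{a_2z/n}+\gamma_1)^n$ has $n+1\ge 4$ nonvanishing exponential terms, so it can never equal a three-term sum; the assertion that ``all other binomial terms must vanish, which is consistent'' is simply wrong.

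The paper proceeds quite differently. After establishing that $f$ is entire of order one with $m(r,1/f)=S(r,f)$ and $m\bigl(r,e^{a_jz}/f^n\bigr)=S(r,f)$ via Lemma~\ref{c2.11}, the ratio hypotheses $a_1/a_2=s/n$ and $a_3/a_2=t/n$ give $m\bigl(r,e^{a_1z}/f^{n-1}\bigr)=S(r,f)$ and $m\bigl(r,e^{a_3z}/f^{n-1}\bigr)=S(r,f)$. Differentiating \eqref{F} and eliminating $e^{a_2z}$ yields
\[
f^{n-1}\bigl(a_2f-nf'\bigr)+a_2P-P'=(a_2-a_1)p_1e^{a_1z}+(a_2-a_3)p_3e^{a_3z},
\]
and dividing by $f^{n-1}$ shows that $\phi:=a_2f-nf'$ is a small entire function of $f$. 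This is a first-order linear ODE for $f$, solved by Lemma~\ref{lm3.5} and a Phragm\'en--Lindel\"of argument to give $f=c_1e^{a_2z/n}+\gamma$ with $\gamma$ small and entire; substituting back yields $c_1^n=p_2$. The point is that the difference polynomial $P_d$ is never stripped off to produce an equation $h^n=g$; instead one shows directly that $f$ satisfies $f'-\tfrac{a_2}{n}f=$ (small), and it is precisely the ratio conditions on $a_1,a_3$ (absent from your sketch until the very end) that make the right-hand side small after division by $f^{n-1}$.
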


\begin{rem}
Theorem 1.2 shows that Theorem \ref{thE} still holds when $P_d(z,f)\not\equiv 0$ is a difference polynomial in $f$ of degree $d= n-1$ with small entire functions of $f$ as its coefficients.
\end{rem}

\begin{rem}
If $P_d(z,f)$ is a differential-difference polynomial, then under the supplementary condition  $N(r,f)=S(r,f)$, by applying the proof method of Theorem 1.2, the conclusion of Theorem 1.2 still holds.

\end{rem}

Finally, we give a negative answer to Question 3.

\begin{thmx}
    Let $n\ge3$ be an integer,$~a_j$ be nonzero distinct constants,$~p_j(z)$ be nonzero meromorphic functions, $j=1,2,3$. Then the following  equation
\begin{align}
\label{th1.3}
f^n(z)=p_1e^{a_1z}+p_2e^{a_2z}+p_3e^{a_3z}
\end{align}
  does not admit a  meromorphic solution $f$ such that $T(r,p_j)=S(r,f),~j=1,2,3$.
\end{thmx}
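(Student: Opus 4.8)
I plan to argue by contradiction. Suppose $f$ is a meromorphic solution of \eqref{th1.3} with $T(r,p_j)=S(r,f)$ for $j=1,2,3$, and set $G=p_1e^{a_1z}+p_2e^{a_2z}+p_3e^{a_3z}$, so that $f^n=G$. First I would record the cheap facts. A rational $f$ is impossible (it would force each $p_j$ to be constant, as $T(r,p_j)=S(r,f)=o(\log r)$, and then $G$ is transcendental), so $f$ is transcendental. Comparing characteristics in $f^n=G$ and using $T(r,e^{a_jz})=|a_j|r/\pi$ gives $nT(r,f)\le\sum_jT(r,p_j)+\bigl(\sum_j|a_j|/\pi\bigr)r+O(1)$, whence $T(r,f)=O(r)$; in particular $\rho(f)\le1$ (so the logarithmic derivative lemma is available) and $S(r,f)=o(r)$. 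Since the poles of $G$ can only lie over poles of the $p_j$ and with no larger order, $nN(r,f)=N(r,G)\le\sum_jN(r,p_j)=S(r,f)$, so $N(r,f)=S(r,f)$. What remains is to bound the zeros of $f$ and then reach a contradiction.

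Next I would dispose of the degenerate configuration in which $p_1e^{a_1z},p_2e^{a_2z},p_3e^{a_3z}$ are linearly dependent over $\mathbb{C}$. No two of them are proportional (that would make some $p_i/p_j$ equal a constant multiple of $e^{(a_j-a_i)z}$, impossible since $T(r,p_i/p_j)=o(r)$ while $a_i\neq a_j$), so one term is a $\mathbb{C}$-combination of the other two and $G$ reduces to a two-term sum $P_ie^{a_iz}+P_je^{a_jz}$ with $a_i\neq a_j$ and $P_i,P_j$ constant multiples of $p_i,p_j$. If both $P_i,P_j\not\equiv0$, this contradicts Theorem~\ref{thD} (applicable since $\rho(f)<\infty$ and $T(r,P_i)=S(r,f)$). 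If one of them vanishes then $f^n=Pe^{a_kz}$ with $P\not\equiv0$ small (or $f^n\equiv0$); substituting back into $f^n=G$ yields $p_ie^{a_iz}+p_je^{a_jz}+(p_k-P)e^{a_kz}\equiv0$ with distinct exponents and coefficients of characteristic $o(r)$, so Borel's theorem forces $p_i\equiv0$, a contradiction. Hence I may assume $p_1e^{a_1z},p_2e^{a_2z},p_3e^{a_3z}$ are linearly independent.

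The core of the argument is then a Wronskian/Cramer computation. Differentiating $f^n=G$ twice and writing $\alpha_j=p_j'+a_jp_j$, $\beta_j=p_j''+2a_jp_j'+a_j^2p_j$ yields a $3\times3$ linear system for $(e^{a_1z},e^{a_2z},e^{a_3z})$ whose coefficient determinant is $D=e^{-(a_1+a_2+a_3)z}W(p_1e^{a_1z},p_2e^{a_2z},p_3e^{a_3z})\not\equiv0$ with $T(r,D)=S(r,f)$. Solving by Cramer's rule and expanding the numerator along its first column, after pulling out $f^{n-2}$,
\[
e^{a_1z}=\frac{f^{n-2}\Phi_1}{D},\qquad \Phi_1=f^2A_1-nff'B_1+\bigl(nff''+n(n-1)(f')^2\bigr)C_1,
\]
where $A_1,B_1,C_1$ are small functions and $\Phi_1\not\equiv0$ (otherwise $e^{a_1z}\equiv0$). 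Since $e^{a_1z}$ has neither zeros nor poles, $f^{n-2}\Phi_1$ and $D$ have the same divisor, so $N(r,1/(f^{n-2}\Phi_1))=N(r,1/D)=S(r,f)$. At a zero $z_0$ of $f$ of multiplicity $m$ lying over no zero or pole of the coefficients, every term of $f^{n-2}\Phi_1=f^nA_1-nf^{n-1}f'B_1+nf^{n-1}f''C_1+n(n-1)f^{n-2}(f')^2C_1$ vanishes to order at least $nm-2$; hence, since $n\ge3$,
\[
(n-2)N(r,1/f)\le nN(r,1/f)-2\overline N(r,1/f)\le N(r,1/(f^{n-2}\Phi_1))+S(r,f)=S(r,f),
\]
so $N(r,1/f)=S(r,f)$ (the zeros of $f$ over zeros or poles of the coefficients contribute only $S(r,f)$ and are absorbed into the last term).

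Now $\overline N(r,f)+\overline N(r,1/f)=S(r,f)$, so the logarithmic derivative lemma gives $T(r,f'/f)=S(r,f)$. Differentiating $f^n=G$ once more, $(f^n)'=n\tfrac{f'}{f}f^n$, i.e.\ $G'-n\tfrac{f'}{f}G\equiv0$, which reads
\[
\sum_{j=1}^{3}\Bigl(p_j'+\bigl(a_j-n\tfrac{f'}{f}\bigr)p_j\Bigr)e^{a_jz}\equiv0 .
\]
The coefficients have characteristic $S(r,f)=o(r)$, so Borel's theorem forces $p_j'/p_j=nf'/f-a_j$ for $j=1,2,3$; subtracting the $j=1$ and $j=2$ identities gives $\bigl(\log(p_1/p_2)\bigr)'=a_2-a_1$, hence $p_1/p_2=Ce^{(a_2-a_1)z}$ for a nonzero constant $C$ — impossible, since the left side has characteristic $S(r,f)=o(r)$ while $T(r,e^{(a_2-a_1)z})=|a_1-a_2|r/\pi$ and $a_1\neq a_2$. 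This contradiction proves the theorem. I expect the main obstacle to be precisely the estimate $N(r,1/f)=S(r,f)$: the obvious bound from $f^n=G$ (zeros of $f$ being zeros of $G$ of multiplicity divisible by $n$) only yields $\overline N(r,1/f)\le\tfrac1nT(r,G)+S(r,f)$, which is of the same order as $T(r,f)$ and hence useless, so one genuinely needs the Cramer representation $e^{a_1z}=f^{n-2}\Phi_1/D$ together with the zero‑freeness of $e^{a_1z}$ to turn a zero of $f$ into a zero of the small function $D$; the growth bound, the pole count, the degenerate case, and the concluding Borel step are comparatively routine.
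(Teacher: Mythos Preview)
Your argument is correct and takes a genuinely different route from the paper's. The paper first makes the substitution $g=fe^{-a_3z/n}$, so that $g^n-p_3=p_1e^{b_1z}+p_2e^{b_2z}$ with $b_i=a_i-a_3$; the nonzero ``constant term'' $p_3$ gives $m(r,1/g)=S(r,g)$ via Lemma~\ref{lm2.8}, and two differentiations followed by successive elimination of $e^{b_2z}$ and $e^{b_1z}$ yield an identity $g^{n-2}Q(z,g)=R$ with $Q$ a degree-two differential polynomial in $g$ and $R$ a small function, both not identically zero. From $Q=R/g^{n-2}$ one reads off $T(r,Q)=S(r,g)$ and hence $(n-2)T(r,g)=T(r,R/Q)=S(r,g)$, a contradiction since $n\ge 3$. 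By contrast, you avoid the substitution and work directly with $f$: the Cramer representation $e^{a_1z}=f^{n-2}\Phi_1/D$ together with the zero-freeness of $e^{a_1z}$ forces $N(r,1/f)=S(r,f)$; then $f'/f$ is small, and Borel applied to $\sum_j\bigl(p_j'+(a_j-nf'/f)p_j\bigr)e^{a_jz}\equiv0$ gives the impossible relation $p_1/p_2=Ce^{(a_2-a_1)z}$. The paper's path is shorter---the substitution trick buys $m(r,1/g)=S(r,g)$ for free and finishes with a single Clunie-type estimate---while yours is more systematic, needs no clever change of variable, and yields the additional information $N(r,1/f)=S(r,f)$.

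Two minor remarks. First, your degenerate-configuration paragraph is in fact vacuous: since the $a_j$ are distinct and $T(r,p_j)=o(r)$, Lemma~\ref{borel} applied to any relation $\sum c_jp_je^{a_jz}\equiv0$ forces all $c_jp_j\equiv0$, so the three functions are automatically $\mathbb C$-linearly independent and $D\not\equiv0$. Second, your bookkeeping ``the zeros of $f$ over zeros or poles of the coefficients contribute only $S(r,f)$'' is a bit quick; a cleaner way to reach the same conclusion is to use the divisor inequality $N\bigl(r,1/(f^{n-2}\Phi_1)\bigr)\ge (n-2)N(r,1/f)-N(r,\Phi_1)$ directly, noting that $N(r,\Phi_1)=S(r,f)$ because the poles of $\Phi_1$ lie over poles of $f$ or of the small coefficients $A_1,B_1,C_1$.
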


We can prove the following corollary by using a similar method to that used in the proof of Theorem 1.3,  where $a_3=0$ in equation \eqref{th1.3}.

\begin{cor}
 Let $n\ge3$ be an integer,$~a_i,~i=1,2,$ be nonzero distinct constants,$~p_j(z)$ be nonzero meromorphic functions, $j=1,2,3$. Then   equation
\begin{align*}
f^n(z)=p_1e^{a_1z}+p_2e^{a_2z}+p_3
\end{align*}
  does not admit a  meromorphic solution $f$ such that $T(r,p_j)=S(r,f),~j=1,2,3$.
\end{cor}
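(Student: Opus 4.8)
Since the corollary is exactly the case $a_3=0$ of Theorem~1.3 (with $p_3$ now in the role of a non-zero small function), the plan is to carry out the proof of Theorem~1.3 and then set $a_3=0$; I outline that argument. Assume, for contradiction, that $f$ is a meromorphic solution of $f^{n}=p_1e^{a_1z}+p_2e^{a_2z}+p_3=:g$ with $T(r,p_j)=S(r,f)$, $j=1,2,3$. First I would dispose of the trivial possibility that $f$ is rational (then $g=f^{n}$ would be rational, which is incompatible with the exponential growth of $p_1e^{a_1z}+p_2e^{a_2z}+p_3$), so $f$ is transcendental; moreover $g\not\equiv0$ by a Borel-type argument, hence $f\not\equiv0$. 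From $n\,T(r,f)=T(r,g)\le T(r,p_1e^{a_1z})+T(r,p_2e^{a_2z})+T(r,p_3)+O(1)=O(r)+S(r,f)$ one gets $\rho(f)\le1$, and therefore $S(r,f)=o(r)$. Since the poles of $g$ lie only among the poles of the $p_j$, we have $n\,N(r,f)=N(r,g)=S(r,f)$, so $N(r,f)=S(r,f)$ and $T(r,f)=m(r,f)+S(r,f)$.

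The core of the proof --- and the only place where $n\ge3$ is used --- is the estimate $\overline{N}(r,1/f)=S(r,f)$. Put $P_1:=p_1'+a_1p_1$, $P_2:=p_2'+a_2p_2$, $P_3:=p_3'$ and $Q_i:=P_i'+a_iP_i$ for $i=1,2$, $Q_3:=P_3'$, so that $g'=P_1e^{a_1z}+P_2e^{a_2z}+P_3$ and $g''=Q_1e^{a_1z}+Q_2e^{a_2z}+Q_3$. The functions $p_1e^{a_1z},p_2e^{a_2z},p_3$ are linearly independent over $\mathbb{C}$ --- otherwise a Borel-type lemma applied to the corresponding vanishing linear combination, whose frequencies $a_1,a_2,0$ are pairwise distinct, would force one of the $p_j$ to be identically zero --- so their Wronskian is not identically zero; equivalently, the determinant $W$ of the coefficient matrix with rows $(p_1,p_2,p_3)$, $(P_1,P_2,P_3)$, $(Q_1,Q_2,Q_3)$ satisfies $W\not\equiv0$, and, $W$ being a differential polynomial in the $p_j$, $T(r,W)=S(r,f)$. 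Now $g=f^{n}$ forces every zero of $g$ to have multiplicity divisible by $n$, hence $\ge n\ge3$; at such a point $z_0$ --- apart from the $S(r,f)$-set of poles of the $p_j$ --- one has $g(z_0)=g'(z_0)=g''(z_0)=0$, so the non-zero vector $(e^{a_1z_0},e^{a_2z_0},1)$ is annihilated by the coefficient matrix evaluated at $z_0$, forcing $W(z_0)=0$. Hence every zero of $g$ is a zero of $W$, and so $\overline{N}(r,1/f)=\overline{N}(r,1/g)\le N(r,1/W)+S(r,f)=S(r,f)$.

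Given $\overline{N}(r,f)=S(r,f)$ and $\overline{N}(r,1/f)=S(r,f)$, the logarithmic derivative $\psi:=f'/f$ has only simple poles, located among the zeros and poles of $f$; thus $N(r,\psi)\le\overline{N}(r,1/f)+\overline{N}(r,f)=S(r,f)$, and with $m(r,\psi)=S(r,f)$ we get $T(r,\psi)=S(r,f)$, i.e.\ $\psi$ is a small function of $f$. Differentiating $g=f^{n}$ and using $f'=\psi f$ gives $g'=nf^{n-1}f'=n\psi g$; comparing this with $g'=P_1e^{a_1z}+P_2e^{a_2z}+P_3$ and $g=p_1e^{a_1z}+p_2e^{a_2z}+p_3$ yields
\[
(p_1'+a_1p_1-n\psi p_1)e^{a_1z}+(p_2'+a_2p_2-n\psi p_2)e^{a_2z}+(p_3'-n\psi p_3)\equiv0 .
\]
All three coefficients are small functions of $f$ and the frequencies $a_1,a_2,0$ are pairwise distinct, so the Borel-type lemma forces $p_1'/p_1=n\psi-a_1$, $p_2'/p_2=n\psi-a_2$, $p_3'/p_3=n\psi$. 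Subtracting the first two relations gives $p_1'/p_1-p_2'/p_2=a_2-a_1\ne0$, whence $p_1/p_2=c\,e^{(a_2-a_1)z}$ for some constant $c\ne0$; then $T(r,p_1/p_2)=\tfrac{|a_2-a_1|}{\pi}\,r+O(1)$, contradicting $T(r,p_1/p_2)\le T(r,p_1)+T(r,p_2)+O(1)=S(r,f)=o(r)$. This contradiction proves the corollary.

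The step I expect to be the genuine obstacle is $\overline{N}(r,1/f)=S(r,f)$, and it is exactly here that $n\ge3$ is indispensable: for $n=2$ the multiplicity bound drops to $\ge2$, the coefficient system shrinks to $2\times3$ and is generically of full rank, so no determinant vanishes --- which is consistent with Theorem~\ref{thF}, where for $n=2$ solutions with $\overline{N}(r,1/f)\ne S(r,f)$ genuinely exist. One must also take minor care at the finitely controlled set of points where $p_j,p_j',p_j''$ or $W$ vanish or have poles, but these only contribute $S(r,f)$. Running the same scheme without setting $a_3=0$, for general pairwise distinct non-zero $a_1,a_2,a_3$, proves Theorem~1.3 itself.
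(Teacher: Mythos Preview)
Your argument is correct, but it follows a genuinely different route from the paper's. The paper (for Theorem 1.3, of which the corollary is the case $a_3=0$, so $g=f$) proceeds by successive differentiation and elimination: after obtaining $m(r,1/g)=S(r,g)$ from Lemma~\ref{lm2.8}, it eliminates $e^{b_2z}$ and then $e^{b_1z}$ to reach an identity of the form $g^{n-2}Q(z,g)=R$, where $Q$ is a differential polynomial of degree~$2$ in $g$ and $R$ is a small function built from the $p_j$'s; from $m(r,1/g)=S(r,g)$ one gets $T(r,Q)=S(r,g)$, hence $(n-2)T(r,g)=T(r,R/Q)=S(r,g)$, a contradiction for $n\ge3$.

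Your proof instead attacks the zeros of $f$ directly via the $3\times 3$ Wronskian: a zero of $f$ forces three vanishing conditions on $g,g',g''$, hence a zero of the coefficient determinant $W$, and $T(r,W)=S(r,f)$ yields $\overline N(r,1/f)=S(r,f)$; then $\psi=f'/f$ is small, $g'=n\psi g$, and Borel's lemma applied to the resulting exponential identity produces the contradiction $p_1/p_2=c\,e^{(a_2-a_1)z}$. Both arguments exploit $n\ge 3$ at exactly one place --- you to ensure $g,g',g''$ all vanish at a zero of $f$, the paper to make the exponent $n-2$ positive --- and both are sound. Your approach is more value-distribution flavoured and arguably more transparent for the pure power $f^n$; the paper's Clunie-type elimination, on the other hand, is the same machinery that drives Theorems~1.1 and~1.2 with a nontrivial $P_d(z,f)$, where your Wronskian step (which relies on every zero of $g$ having multiplicity $\ge n$) would no longer be available.
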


\section{Examples}
In this section, we present some examples to demonstrate our results. Firstly, the following three examples are provided to illustrate the conclusions of Theorem 1.1, specifically $(1),~(2),~(3)$ respectively. Additionally, the case where $\frac{a_2}{a_1}=\frac{n-2}{n}$ can occur is also included.
\begin{exa}
$f=2e^z$ satisfies the equation $f^4(z)+f^2(z+i\pi)+f(z+2i\pi)=16e^{4z}+2e^{z}+4e^{2z}
$. Here $n=4$, $d=2$ and $a_1=4,~a_2=1,~a_3=2$. This shows the conclusion $(1)$ of Theorem 1.1.
\end{exa}

\begin{exa}
$f=e^z+1$ satisfies the equation $f^3(z)+f(z+2i\pi)-2=e^{3z}+4e^{z}+3e^{2z}
$. Here $n=3$, $d=1$ and $a_1=3,~a_2=1,~a_3=2$. This shows that the conclusion $(2)$ of Theorem 1.1, and $\frac{a_2}{a_1}=\frac{n-2}{n}$  can occur.
\end{exa}

\begin{exa}
$f=e^{z}+e^{-z}$ satisfies the equation $f^3(z)+\frac{3}{2i}f(z+\frac{i\pi}{2})-\frac{3}{2}f=e^{3z}+3e^{z}+e^{-3z}
$. Here $n=3$, $d=1$ and $a_1=3,~a_2=1,~a_3=-3$. This shows the conclusion $(3)$ of Theorem 1.1.
\end{exa}

Next, we give the following two examples to illustrate the conclusion of Theorem 1.2.

\begin{exa}
Equation $f^3(z)-f^2(z+i\pi)=2e^{2z}+e^{3z}+5e^{z}
$ has a solution $f=e^{z}+1$. Here $n=3$, $d=2$ and $a_1=2,~a_2=3,~a_3=1$. %This shows the conclusion   of Theorem 1.2.
\end{exa}

\begin{exa}
Equation $f^3(z)-3zf^2(z)+f^2(z)+3z^2f(z)-2zf(z)+f(z+2i\pi)-z^3+z^2-z-2i\pi=e^{2z}+e^{3z}+e^{z}
$ has a solution $f=e^{z}+z$. Here $n=3$, $d=2$ and $a_1=2,~a_2=3,~a_3=1$. %This shows the conclusion  of Theorem 1.2.
\end{exa}

Finally, Example 6 is provided to demonstrate the necessary condition regarding $a_1, a_2, a_3 ,$ in Theorem 1.2.
\begin{exa}
\cite[Example 1.4]{cmf2024}
The equation $f^3(z)-f^2(z)+2f(z)-f(z-\ln2)+1=e^{3z}+\frac{3}{2}e^z+e^{-3z}
$ has a solution $f=e^{z}+e^{-z}+1$ that does not satisfy the conclusion of Theorem 1.2. Obviously, $a_1=3,~a_2=1,~a_3=-3$ does not satisfy the conditions $\frac{1}{n}\le\frac{a_1}{a_2}\le\frac{n-1}{n}$ and $\frac{1}{n}\le\frac{a_3}{a_2}\le\frac{n-1}{n}$. This shows the necessary condition on $a_1,a_2,a_3,$ in Theorem 1.2.
\end{exa}

\section{Preliminary lemmas}

n this section, we will collect some preliminary results for proving our results. The first lemma can be found in \cite{lp2011}.
\begin{lem}\cite[Lemma 6] {lp2011}
\label{lm2.2}
Suppose that $f$ is a transcendental meromorphic function, $a, b, c, d,$ are the small functions of $f$  such that $acd\not\equiv0$. If
\begin{align*}
    af^2+bff'+c(f')^2=d,
\end{align*}
then $c(b^2-4ac)\frac{d'}{d}+b(b^2-4ac)-c(b^2-4ac)'+(b^2-4ac)c'=0$.
\end{lem}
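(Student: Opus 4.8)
The plan is to convert the quadratic relation into a statement about a single auxiliary function and then read off the identity from its value distribution. Throughout set $\Delta=b^2-4ac$; if $\Delta\equiv0$ the asserted identity reduces to $0=0$, so assume $\Delta\not\equiv0$. First I would complete the square: multiplying $af^2+bff'+c(f')^2=d$ by $4c$ gives
\[(2cf'+bf)^2=(b^2-4ac)f^2+4cd=\Delta f^2+4cd.\]
Introduce $v:=\dfrac{2cf'+bf}{f}=b+2c\dfrac{f'}{f}$, a meromorphic function with $m(r,v)=S(r,f)$ by the logarithmic derivative lemma. Dividing the displayed identity by $f^2$ yields the master relation $v^2-\Delta=\dfrac{4cd}{f^2}$.

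Next I would locate the poles and zeros of $v$. At a pole of $f$ where the coefficients are finite and nonzero, $f'/f$ and hence $v$ would have a pole, yet $v^2-\Delta=4cd/f^2\to0$ there; these are incompatible, so $f$ has no such poles and $N(r,f)=S(r,f)$. Comparing pole orders in the master relation forces every zero of $f$ (off the coefficient set) to be simple and shows $v$ has a simple pole there, so $N(r,v)=N(r,1/f)+S(r,f)$. Moreover, if $N(r,1/f)=S(r,f)$ then $T(r,f'/f)=S(r,f)$, making $v$ and $v^2-\Delta$ small; then $f^2=4cd/(v^2-\Delta)$ forces either $T(r,f)=S(r,f)$ or $cd\equiv0$, both impossible. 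Hence $v$ is a \emph{non-small} function with $T(r,v)=N(r,1/f)+S(r,f)$.

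I would then extract a first-order relation. Taking the logarithmic derivative of $f^2=4cd/(v^2-\Delta)$ and substituting $f'/f=(v-b)/(2c)$ eliminates $f$ and produces, after clearing denominators, the autonomous relation
\[v^3+2cvv'-cPv^2-\Delta v+\kappa=0,\qquad P:=\frac{(cd)'}{cd}+\frac{b}{c},\quad \kappa:=c(P\Delta-\Delta').\]
A direct expansion gives $\kappa=c\Delta\frac{d'}{d}+b\Delta-c\Delta'+\Delta c'$, so $\kappa$ is exactly the left-hand side of the desired identity; it remains to prove $\kappa\equiv0$. Evaluating the relation at a zero of $v$ (off the coefficient set) gives $\kappa=0$ there, whence $N(r,1/v)\le N(r,1/\kappa)+S(r,f)=S(r,f)$; and $v^2-\Delta=4cd/f^2$ vanishes only at poles of $f$, so $\overline N(r,1/(v^2-\Delta))\le 2\overline N(r,f)+S(r,f)=S(r,f)$. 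Thus $v$ attains each of the three small targets $0,+\sqrt\Delta,-\sqrt\Delta$ with only $S(r,v)$ preimages, and the Second Main Theorem forces $T(r,v)\le S(r,v)$, contradicting non-smallness unless $\kappa\equiv0$.

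The main obstacle is this last step: it rests on the Second Main Theorem for three \emph{small} target functions and requires $0,\pm\sqrt\Delta$ to be genuinely distinct and admissible. When $\sqrt\Delta$ is not meromorphic I would avoid it entirely, arguing with the single function $v^2-\Delta$ together with the rational-in-$v$ form above (or passing to $v^2$), so that only $N(r,1/v)=S(r,f)$, $\overline N(r,1/(v^2-\Delta))=S(r,f)$ and the non-smallness of $v$ are used. A secondary point demanding care is the bookkeeping that every exceptional contribution arising from the zeros and poles of the coefficients $a,b,c,d$ is genuinely absorbed into $S(r,f)$.
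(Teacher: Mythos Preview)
The paper does not supply a proof of this lemma; it is quoted from \cite[Lemma~6]{lp2011} and no argument is reproduced, so there is nothing in the present paper to compare your attempt against.

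Taken on its own, your outline is correct. The completion of the square $(2cf'+bf)^2=\Delta f^2+4cd$ and the substitution $v=b+2cf'/f$ lead, via the logarithmic derivative of $f^2=4cd/(v^2-\Delta)$, to the relation $v^3+2cvv'-cPv^2-\Delta v+\kappa=0$ with $\kappa=c\Delta\frac{d'}{d}+b\Delta-c\Delta'+\Delta c'$ exactly the quantity to be shown to vanish; this computation checks. Your deductions that $N(r,f)=S(r,f)$, that the zeros of $f$ (off the coefficient set) are simple, that $N(r,1/v)=S(r,f)$ whenever $\kappa\not\equiv0$, and that $T(r,v)=N(r,1/f)+S(r,f)$ is non-small are all valid. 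The only soft spot is the final contradiction: invoking the Second Main Theorem for $v$ with the three targets $0,\pm\sqrt\Delta$ requires $\sqrt\Delta$ to be single-valued meromorphic, which can fail. Your own suggested repair via $w=v^2$ closes this cleanly: applying the Second Main Theorem for three small functions to $w$ with targets $\infty,0,\Delta$ gives
\[
2T(r,v)+O(1)=T(r,w)\le \overline N(r,w)+\overline N(r,1/w)+\overline N(r,1/(w-\Delta))+S(r,w)\le \overline N(r,v)+S(r,f),
\]
hence $T(r,v)\le S(r,f)$, the desired contradiction. One bookkeeping point worth making explicit is that $f^2=4cd/(v^2-\Delta)$ yields $T(r,f)\le T(r,v)+S(r,f)$, so $S(r,f)$ and $S(r,v)$ coincide and all error terms are genuinely negligible.
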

\begin{rem}
From \cite[Remark 2.9 ] {cmf2024}, we know that the condition $acd\not\equiv0$ in Lemma \ref{lm2.2} can
be replaced by $cd\not\equiv0$. % Cf.  \cite[Remark 2.9 ] {cmf2024}.
\end{rem}

The following lemma is concerned with the Fermat type functional equation due to Yang   et al. \cite{ccy2004}.
\begin{lem}\cite[Theorem 2] {ccy2004}
\label{ccy2004}
Let $b$ be a nonzero constant, and $a(z)$ be a meromorphic function. If $a(z)$ is not constant, then
\begin{align*}
f^2(z)+b(f')^2=a(z)
\end{align*}
has no transcendental meromorphic solution $f(z)$ such that $T(r,a)=S(r,f)$.
\end{lem}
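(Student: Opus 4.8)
The plan is to obtain a contradiction by applying Lemma \ref{lm2.2} directly. Suppose, to the contrary, that $f$ is a transcendental meromorphic solution of $f^2(z)+b(f')^2=a(z)$ with $T(r,a)=S(r,f)$. The key observation is that this equation is exactly of the form treated in Lemma \ref{lm2.2},
\[
Af^2+Bff'+C(f')^2=D,
\]
under the identification $A=1$, $B=0$, $C=b$, and $D=a(z)$ (I use capital letters to avoid a clash with the constant $b$ and the function $a(z)$ appearing in the present statement). The three coefficients $A,B,C$ are constants, hence trivially small functions of $f$, and $D=a$ is a small function of $f$ by hypothesis, so the coefficient requirements of Lemma \ref{lm2.2} are met.

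Next I would verify the nondegeneracy hypothesis. By the Remark following Lemma \ref{lm2.2}, it suffices to check $CD\not\equiv 0$: here $C=b\neq 0$ by assumption and $D=a\not\equiv 0$ because $a$ is non-constant, so $CD\not\equiv 0$ and the lemma applies. I would then substitute into its conclusion. The discriminant is $B^2-4AC=-4b$, a nonzero constant, whence $(B^2-4AC)'\equiv 0$; moreover $C'=b'\equiv 0$ and $B\equiv 0$. Consequently the identity
\[
C(B^2-4AC)\frac{D'}{D}+B(B^2-4AC)-C(B^2-4AC)'+(B^2-4AC)C'=0
\]
collapses to its first term, yielding $-4b^2\frac{a'}{a}=0$. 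Since $b\neq 0$, this forces $a'\equiv 0$, i.e.\ $a$ is constant, contradicting the assumption that $a$ is non-constant. Hence no such transcendental meromorphic solution exists.

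The computation is short, so there is no serious analytic obstacle; the only point requiring care is the bookkeeping of which coefficient plays which role in Lemma \ref{lm2.2}. In particular, the coefficient of the cross term $ff'$ is $B=0$, and this is precisely what makes every derivative term in the conclusion vanish, isolating the single surviving term $-4b^2 a'/a$. I would also note that the transcendence of $f$ is used only to invoke Lemma \ref{lm2.2}, whose hypotheses require $f$ to be transcendental meromorphic.
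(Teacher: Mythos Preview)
The paper does not supply a proof of this lemma; it is quoted verbatim from Yang and Li \cite[Theorem 2]{ccy2004} and used as a black box. Your argument is correct and, in fact, shows that within the paper's own toolkit the result is an immediate corollary of Lemma \ref{lm2.2}: with $A=1$, $B=0$, $C=b$, $D=a$, the discriminant $B^2-4AC=-4b$ is a nonzero constant, all derivative terms in the conclusion of Lemma \ref{lm2.2} vanish, and one is left with $-4b^2\,a'/a\equiv 0$, forcing $a$ constant. This is a genuinely nice observation, since the paper cites Lemmas \ref{lm2.2} and \ref{ccy2004} independently without remarking that the latter follows from the former.

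One minor redundancy: you appeal to the Remark after Lemma \ref{lm2.2} to weaken the hypothesis $ACD\not\equiv 0$ to $CD\not\equiv 0$, but here $A=1$, so the original hypothesis $ACD\not\equiv 0$ already holds and the Remark is not needed.
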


The following lemma can be proved by using the similar way as in the proof of \cite[Lemma 2.11]{cmf2024}, we omit the details of the proof.
\begin{lem}
\label{c2.11}
Let $n\ge3$ be an integer, $P_d(z,f)\not\equiv 0$ be a difference polynomial in $f$ of degree $d=n-1$ with small functions of $f$ as its coefficients, and let $p_j,~a_j,~j=1,2,3,$ be nonzero distinct constants.
If difference equation
\begin{align*}
f^n(z)+P_d(z,f)=p_1e^{a_1z}+p_2e^{a_2z}+p_3e^{a_3z}
\end{align*}
admits a finite order meromorphic solution $f$ such that $P_d(z,0)\not\equiv0$, then $m(r,\frac{1}{f})=S(r,f)$, $m(r,\frac{a_j}{f^n})=S(r,f),~~j=1,2,3$.
\end{lem}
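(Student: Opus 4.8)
The plan is to adapt the proof of \cite[Lemma 2.11]{cmf2024}. Put $g:=p_1e^{a_1z}+p_2e^{a_2z}+p_3e^{a_3z}$. First I would record the facts on which everything rests. As $g$ is transcendental while a rational $f$ would make the left-hand side rational, $f$ is transcendental. Since $\deg P_d=n-1<n$, comparing the growth of the two sides of the equation --- using $T(r+C,f)=(1+o(1))T(r,f)$ for finite order $f$ together with the fact that $T(r,g)$ is bounded above and below by positive constant multiples of $r$ --- shows that $T(r,f)$ is likewise bounded above and below by positive constant multiples of $r$; in particular $\rho(f)=1$ and $S(r,g)=S(r,f)$. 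I would also note that $m(r,1/f^{n})=n\,m(r,1/f)+O(1)$, and that, since each $p_je^{a_jz}$ is a linear combination of $g,g',g''$ with constant coefficients (the relevant coefficient matrix being Vandermonde in the distinct $a_j$), the logarithmic derivative lemma gives $m(r,p_je^{a_jz}/g)=S(r,f)$ for $j=1,2,3$.

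The core step is the estimate $m(r,1/f)=S(r,f)$, and this is exactly where the hypothesis $P_d(z,0)\not\equiv0$ enters. Write $P_d(z,f)=Q(z,f)+b_0$, where $b_0:=P_d(z,0)\not\equiv0$ is a small function of $f$ and each monomial $b_i(z)\prod_l f(z+c_{il})^{k_{il}}$ of $Q(z,f)$ has total degree $k_i=\sum_l k_{il}$ with $1\le k_i\le n-1$. Rewriting the equation as $f^n+Q(z,f)=g-b_0$ and dividing by $f^n$ gives
\[ \frac1{f^{n}}=\frac1{g-b_0}\Bigl(1+\frac{Q(z,f)}{f^{n}}\Bigr). \]
For each monomial of $Q$ I would factor out the quotients $f(z+c_{il})/f(z)$ and apply the difference logarithmic derivative lemma (\cite{halburd2006,chiang}, valid since $\rho(f)<\infty$), obtaining $m(r,\,b_i\prod_l f(z+c_{il})^{k_{il}}/f^{n})\le(n-k_i)m(r,1/f)+S(r,f)\le(n-1)m(r,1/f)+S(r,f)$, and hence $m(r,Q(z,f)/f^{n})\le(n-1)m(r,1/f)+S(r,f)$. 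Feeding this together with $m(r,1/f^{n})=n\,m(r,1/f)+O(1)$ into the displayed identity yields $n\,m(r,1/f)\le m(r,1/(g-b_0))+(n-1)m(r,1/f)+S(r,f)$, that is, $m(r,1/f)\le m(r,1/(g-b_0))+S(r,f)$.

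The main obstacle will be to show $m(r,1/(g-b_0))=S(r,f)$. Since $b_0\not\equiv0$, the function $g-b_0=p_1e^{a_1z}+p_2e^{a_2z}+p_3e^{a_3z}-b_0$ is an exponential sum in which the frequency $0$ genuinely occurs, carrying the small but non-vanishing coefficient $-b_0$; thus $0$ lies in the convex hull of the frequencies, and by the zero-distribution theory of exponential sums (a Steinmetz-type estimate, which I would cite in a form valid also for such slowly growing coefficients, or else verify by a direct computation) the zeros of $g-b_0$ have density equal to that of its Nevanlinna characteristic, i.e. $N(r,1/(g-b_0))=T(r,g-b_0)+S(r,f)$. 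Since moreover $N(r,g-b_0)=N(r,b_0)=S(r,f)$ and $T(r,g-b_0)=(1+o(1))T(r,g)$, we get $m(r,1/(g-b_0))=S(r,f)$, whence $m(r,1/f)=S(r,f)$. The remaining assertions are then immediate: $m(r,a_j/f^{n})\le m(r,1/f^{n})+O(1)=n\,m(r,1/f)+O(1)=S(r,f)$, and, once $m(r,1/f)=S(r,f)$, one also has $m(r,P_d(z,f)/f^{n})\le m(r,Q(z,f)/f^{n})+m(r,b_0/f^{n})=S(r,f)$ and therefore $m(r,p_je^{a_jz}/f^{n})\le m(r,p_je^{a_jz}/g)+m(r,1+P_d(z,f)/f^{n})=S(r,f)$ for $j=1,2,3$.
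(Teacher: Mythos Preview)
Your proposal is correct and follows exactly the route the paper itself indicates: the paper omits the proof and simply says it ``can be proved by using the similar way as in the proof of \cite[Lemma 2.11]{cmf2024}'', which is precisely what you do.

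One remark on the only step you flagged. Your detour through zero-counting (``$N(r,1/(g-b_0))=T(r,g-b_0)+S(r,f)$ by a Steinmetz-type estimate'') is more than you need. The paper already records the relevant tool as Lemma~\ref{lm2.8}(ii) (from \cite{mzq2022}): for $\varphi=H_0+\sum H_je^{\omega_jz}$ with $H_0\not\equiv0$ one has $m(r,1/\varphi)=o(r)$. Applying this with $\varphi=g-b_0$ and $H_0=-b_0$ gives $m(r,1/(g-b_0))=S(r,f)$ directly, without invoking zero-distribution. Your hedge is still warranted, since Lemma~\ref{lm2.8} as stated asks for coefficients of order $<1$, whereas $b_0=P_d(z,0)$ is only known to satisfy $T(r,b_0)=o(r)$; but this is the same technical point present in the original reference, and in the applications of the lemma inside the paper the coefficients are in any case constants or polynomials.

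Finally, note that the literal conclusion ``$m(r,a_j/f^n)=S(r,f)$'' is a typo in the paper for ``$m(r,e^{a_jz}/f^n)=S(r,f)$'' (compare how Lemma~\ref{c2.11} is invoked in the proofs of Theorems~1.1 and~1.2); your argument via the Vandermonde decomposition $p_je^{a_jz}\in\mathrm{span}\{g,g',g''\}$ and $m(r,g^{(k)}/g)=S(r,f)$ handles the intended statement cleanly.
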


\begin{lem}\cite[Lemma 2.3]{zyy2021}
\label{lm3.5}
Let $\lambda$ be a nonzero constant and $k(z)$ be a small entire function of $e^z$. Suppose that $f$ is a solution
of the first-order differential equation
\begin{align*}
    f'-\lambda f=k.
\end{align*}
Let $S_{j }=\{re^{i\theta}:0\le r<\infty,~ \theta_j <\theta<\theta_{j+1} \},~j=1,~2$, where $\theta_1<\pi$,  $\theta_2=\theta_1+\pi$, and $\theta_3=\theta_1+2\pi$. For any given $\epsilon>0$, let $S_{j,\epsilon}=\{re^{i\theta}:0\le r<\infty,~\theta_j+\epsilon<\theta<\theta_{j+1}-\epsilon\},~j=1,~2,$ and $\overline S_{1,\epsilon}$, $\overline S_{2}$ denote the closures of $S_{1,\epsilon},~S_{2}$, respectively.
Then there is an entire function $H$ such that $f=ce^{\lambda z}+H(z)$, where c is constant, and $H$ satisfies
$|H(z)|\le e^{o(1)|z|}$ uniformly in $\overline S_{1,\epsilon}\cup \overline S_{2}$ as $z\to\infty$.
\end{lem}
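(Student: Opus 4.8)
The plan is to prove the stronger, non-sectorial statement that $f$ decomposes as $f=ce^{\lambda z}+H$ with $H$ \emph{entire} and $|H(z)|\le e^{o(1)|z|}$ on the \emph{whole} plane; the sectorial conclusion is then immediate. Since $k$ is entire, variation of parameters shows every solution of $f'-\lambda f=k$ equals $e^{\lambda z}\bigl(c_0+\int_0^z e^{-\lambda t}k(t)\,dt\bigr)$ for some constant $c_0$, but this particular solution is only of slow growth in the half-plane where $e^{\lambda z}$ stays bounded. I would instead build a better particular solution by moving the lower endpoint of integration to infinity along the direction in which $e^{-\lambda t}$ decays fastest: writing $\lambda=|\lambda|e^{i\varphi}$ and $\delta=-\varphi$ (so that $\lambda e^{i\delta}=|\lambda|>0$), set
\[
H(z)=-e^{i\delta}\int_0^{\infty}e^{-\lambda\rho e^{i\delta}}\,k\!\left(z+\rho e^{i\delta}\right)d\rho ,
\]
which is $e^{\lambda z}$ times the integral of $-e^{-\lambda t}k(t)$ over the ray $\{z+\rho e^{i\delta}:\rho\ge0\}$.

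First I would check convergence and analyticity. Because $k$ is a small function of $e^z$, monotonicity of $T(r,\cdot)$ (to absorb the exceptional set) together with $\log M(r,k)\le 3T(2r,k)$ gives $|k(w)|\le e^{\eta|w|}$ for every $\eta>0$ and all large $|w|$; combined with $|e^{-\lambda\rho e^{i\delta}}|=e^{-|\lambda|\rho}$ and $|z+\rho e^{i\delta}|\le|z|+\rho$, the integrand is dominated by $e^{\eta(|z|+\rho)-|\lambda|\rho}$, integrable in $\rho$ once $\eta<|\lambda|$ and uniformly so on compact $z$-sets, whence $H$ is entire. Next, writing $H=e^{\lambda z}\psi$ with $\psi(z)=-\int_0^\infty e^{-\lambda(z+\rho e^{i\delta})}k(z+\rho e^{i\delta})e^{i\delta}\,d\rho$, differentiating under the integral sign turns the integrand's $z$-derivative into a total $\rho$-derivative, yielding $\psi'(z)=e^{-\lambda z}k(z)$ (the endpoint at $\rho=\infty$ contributes nothing, by exponential decay), hence $H'-\lambda H=e^{\lambda z}\psi'=k$.

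The main step is the uniform bound. Fixing $\eta\in(0,|\lambda|)$ and a radius $R_0$ past which $|k(w)|\le e^{\eta|w|}$ holds, split the ray at the set where $|z+\rho e^{i\delta}|<R_0$; using the elementary inequality $|z+\rho e^{i\delta}|\ge\bigl||z|-\rho\bigr|$ this set lies in an interval of length $2R_0$ around $\rho=|z|$, on which the factor $e^{-|\lambda|\rho}$ is $O(e^{-|\lambda||z|})$, while on the complement $\int_0^\infty e^{-|\lambda|\rho}e^{\eta(|z|+\rho)}\,d\rho=e^{\eta|z|}/(|\lambda|-\eta)$. Hence $|H(z)|\le C e^{\eta|z|}$ for large $|z|$, and since $\eta>0$ was arbitrary, $|H(z)|\le e^{o(1)|z|}$ uniformly in $\mathbb{C}$, in particular on $\overline S_{1,\epsilon}\cup\overline S_2$. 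Finally $f-H$ solves $y'-\lambda y=0$, so $f-H=ce^{\lambda z}$ for a constant $c$, giving the claimed representation. The delicate points I anticipate are the uniform growth estimate — chiefly controlling the portion of the ray of integration that passes near the origin, handled by $|z+\rho e^{i\delta}|\ge\bigl||z|-\rho\bigr|$ — and choosing $\delta$ so that $e^{-\lambda t}$ genuinely decays along the ray, which is exactly where $\lambda\neq0$ enters. (Closer to the sectorial phrasing, one could instead keep the elementary solution $e^{\lambda z}\bigl(c_0+\int_0^z e^{-\lambda t}k(t)\,dt\bigr)$ on the closed half-plane where $e^{\lambda z}$ is bounded, use the integral-to-infinity solution on the complementary open half-plane, and reconcile the two by a homogeneous solution; the route above avoids this patching altogether.)
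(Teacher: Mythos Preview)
The paper does not prove this lemma; it is quoted verbatim from \cite{zyy2021} and used as a black box in the proof of Theorem~1.2. So there is no ``paper's own proof'' to compare against, and your proposal supplies an independent argument.

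Your argument is correct. The key points all check out: the choice $\delta=-\arg\lambda$ makes $\lambda e^{i\delta}=|\lambda|$, so the kernel $e^{-\lambda\rho e^{i\delta}}=e^{-|\lambda|\rho}$ genuinely decays; the growth hypothesis $T(r,k)=o(r)$ (with the exceptional set removed by monotonicity, as you indicate) gives $\log M(r,k)=o(r)$ via $\log M(r,k)\le 3T(2r,k)$, hence $|k(w)|\le e^{\eta|w|}$ for every $\eta>0$ and large $|w|$; this yields absolute and locally uniform convergence of the integral, so $H$ is entire; the computation $\psi'(z)=e^{-\lambda z}k(z)$ via the fundamental theorem of calculus along the ray is clean; and the splitting of the $\rho$-integral at $\bigl||z|-\rho\bigr|<R_0$ correctly isolates the bounded-$|k|$ piece and bounds the rest by $e^{\eta|z|}/(|\lambda|-\eta)$.

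In fact your conclusion is strictly stronger than the lemma as stated: you obtain $|H(z)|\le e^{o(1)|z|}$ uniformly on all of $\mathbb{C}$, not merely on $\overline S_{1,\epsilon}\cup\overline S_2$. This is worth noting because the paper, when applying the lemma in the proof of Theorem~1.2, first gets the sectorial bound and then invokes the Phragm\'en--Lindel\"of theorem to extend it to the whole plane. Your direct construction would render that extra step unnecessary.
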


The next lemma is Borel type theorem, which can be found in \cite{ccy2003}.
\begin{lem}\cite[Theorem 1.51]{ccy2003}
\label{borel}
Let $f_1,\,f_2,\ldots,f_n$ be meromorphic functions, $g_1,\,g_2,\ldots,g_n$ be entire functions satisfying
the following conditions,
\begin{itemize}
    \item [\rm{(i)}]$\sum\limits_{j=1}^n{{f_j(z)}}e^{g_j(z)}\equiv0$,
    \item[\rm{(ii)}]For $1\le{j}<{k}\le{n}$, $g_j-g_k$ is not constant,
    \item[\rm{(iii)}]For $1\le{j}\le{n},1\le{t}<{k}\le{n}$, $T(r,f_j)=o\,\{T(r,e^{g_t-g_k})\},$\, $r\rightarrow\infty,\,r\notin{E}$, where E is the set of finite linear measure.
\end{itemize}
Then $f_j(z)\equiv0,\, j=1,\ldots,n$.
\end{lem}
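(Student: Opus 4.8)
The plan is to argue by induction on the number $n$ of summands, using only the standard machinery of Nevanlinna theory (the first and second main theorems and the lemma on the logarithmic derivative) together with the smallness hypothesis (iii). Throughout I write $\phi_j=f_je^{g_j}$, so the hypothesis reads $\sum_{j=1}^n\phi_j\equiv0$. The case $n=1$ is trivial since $e^{g_1}$ never vanishes. For the base case $n=2$ the argument is direct: if $f_1e^{g_1}+f_2e^{g_2}\equiv0$ and $f_1\not\equiv0$, then $f_2\not\equiv0$ and $e^{g_2-g_1}=-f_1/f_2$, so by the first main theorem and (iii),
\[
T(r,e^{g_2-g_1})\le T(r,f_1)+T(r,f_2)+O(1)=o\{T(r,e^{g_2-g_1})\},
\]
which is impossible because $g_2-g_1$ is nonconstant by (ii). Hence $f_1\equiv f_2\equiv0$.

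For the inductive step I assume the statement for fewer than $n$ terms. If some $f_{j_0}\equiv0$, the identity has at most $n-1$ nontrivial terms, the hypotheses (ii) and (iii) are inherited verbatim by the remaining indices, and the induction hypothesis forces all remaining coefficients to vanish. It therefore suffices to rule out the case in which \emph{every} $f_j\not\equiv0$. Assuming this, I multiply $\sum_j\phi_j\equiv0$ by $\phi_n'/\phi_n$ and subtract $\sum_j\phi_j'\equiv0$; the $j=n$ term cancels and, after dividing the $j$-th summand by $e^{g_j}$, I obtain a shorter identity $\sum_{j=1}^{n-1}F_je^{g_j}\equiv0$ with
\[
F_j=f_j\Big[\frac{f_n'}{f_n}-\frac{f_j'}{f_j}+(g_n-g_j)'\Big].
\]
The exponents $g_1,\dots,g_{n-1}$ still have nonconstant pairwise differences, so (ii) persists. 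To apply the induction hypothesis I must verify (iii) for the $F_j$, i.e.\ $T(r,F_j)=o\{T(r,e^{g_t-g_k})\}$ for $1\le t<k\le n-1$. The contributions of $f_j$, $f_j'/f_j$ and $f_n'/f_n$ amount to $T(r,f_j)+T(r,f_n)+S(r,f_j)+S(r,f_n)$, which is $o\{T(r,e^{g_t-g_k})\}$ by (iii) and the logarithmic derivative lemma; moreover $(g_n-g_j)'$ is entire, contributing nothing to the counting function, while its proximity term is $m(r,(g_n-g_j)')=S(r,e^{g_n-g_j})$.

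I expect this last term to be the \textbf{main obstacle}: $S(r,e^{g_n-g_j})$ is negligible only relative to $T(r,e^{g_n-g_j})$, and nothing in (iii) a priori compares $T(r,e^{g_n-g_j})$ (the removed index $n$ appears here) with the characteristics $T(r,e^{g_t-g_k})$, $t,k\le n-1$, occurring in the target estimate. I would resolve this growth comparison by exploiting the standing assumption that \emph{all} $f_j\not\equiv0$: dividing $\sum_j\phi_j\equiv0$ by $-\phi_1$ gives $\sum_{j=2}^n(f_j/f_1)e^{g_j-g_1}\equiv1$, and since each $f_j/f_1$ is small against every $e^{g_t-g_k}$, comparing characteristics as in the base case pins the growth of the various $e^{g_j-g_1}$ to one another, so the stray $S(r,e^{g_n-g_j})$ is indeed $o\{T(r,e^{g_t-g_k})\}$. (For small $n$ the reduction can be bypassed entirely: when $n=3$ the function $A=(f_2/f_1)e^{g_2-g_1}$ omits $0,-1,\infty$ up to sets of counting function $o(T(r,A))$, so the second main theorem applied to these three values yields $T(r,A)=S(r,A)$, a contradiction.)

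Once (iii) is verified, the induction hypothesis gives $F_j\equiv0$ for every $j\le n-1$. For a fixed $j$ this reads $\frac{f_n'}{f_n}+g_n'=\frac{f_j'}{f_j}+g_j'$, so $f_ne^{g_n}=c\,f_je^{g_j}$ for some constant $c\ne0$; hence $e^{g_n-g_j}=c\,f_j/f_n$ and, exactly as in the base case,
\[
T(r,e^{g_n-g_j})\le T(r,f_j)+T(r,f_n)+O(1)=o\{T(r,e^{g_n-g_j})\},
\]
forcing $g_n-g_j$ to be constant and contradicting (ii). This rules out the case that all $f_j\not\equiv0$, and the induction is complete, so $f_j\equiv0$ for every $j$.
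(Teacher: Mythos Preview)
The paper does not prove this lemma; it is quoted without proof from Yang--Yi \cite[Theorem~1.51]{ccy2003}, so there is no in-paper argument to compare yours against.

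Your inductive strategy---eliminate one term by logarithmic differentiation, apply the induction hypothesis to the shorter sum, and derive a contradiction from $F_j\equiv0$---is the classical route, and the base cases, the reduction formula for $F_j$, and the endgame are all correct. The difficulty you single out is real, however, and your proposed fix does not close it. From $\sum_{j\ge2}(f_j/f_1)e^{g_j-g_1}=-1$ one obtains at best
\[
T\bigl(r,e^{g_m-g_1}\bigr)\;\le\;\sum_{j\ne m}T\bigl(r,e^{g_j-g_1}\bigr)+o(\cdot),
\]
which does \emph{not} make the characteristics mutually comparable: one summand may still dominate all the others. In that situation $(g_n-g_j)'$ contributes a term of size $S(r,e^{g_n-g_j})$ to $T(r,F_j)$, and nothing you have written forces this to be $o\bigl(T(r,e^{g_t-g_k})\bigr)$ when $t,k\le n-1$ index only the slower exponentials. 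Consequently hypothesis~(iii) is not verified for the reduced system, and the induction hypothesis cannot be invoked as stated. (This is not a counterexample to the \emph{theorem}---the theorem says exactly that no such configuration with all $f_j\not\equiv0$ exists---but your argument does not rule it out before appealing to induction.)

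Your direct $n=3$ argument via the second main theorem, applied to $A=(f_2/f_1)e^{g_2-g_1}$ with the three deficient targets $0,\infty,-1$, is correct and is in fact closer in spirit to a clean general proof. Complete proofs in the literature typically repair the gap either by choosing the eliminated index so that the \emph{surviving} differences have the larger characteristics, or by iterating the elimination into a Wronskian-type linear system so that (iii) need not be re-verified at each step, rather than by the comparability claim you sketch.
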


\begin{lem}\cite{halburd2014}
\label{lm2.7}
Let $f$ be a nonconstant meromorphic function of hyper-order $\rho_2(f)<1$, $k$ be a positive integer and $c$, $h$ be  nonzero complex numbers. Then the following statements hold.
\begin{itemize}
    \item[\rm{(i)}]$m\left(r,\frac{f^{(k)}(z+c)}{f(z)}\right)=S(r,f)$.
    \item[\rm{(ii)}]$N\left(r,\frac{1}{f(z+c)})\right)=N\left(r,\frac{1}{f(z)}\right)+S(r,f)$, $N(r,{f(z+c)})=N(r,{f(z)})+S(r,f).$
\item[\rm{(iii)}]$T(r,f(z+c))=T(r,f(z))+S(r,f)$.
\end{itemize}
\end{lem}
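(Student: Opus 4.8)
This lemma is the difference counterpart --- valid throughout the range $\rho_2(f)<1$ --- of the lemma on the logarithmic derivative together with the translation-invariance of the Nevanlinna functionals, and it is quoted from \cite{halburd2014}. The plan is to take as a black box the one genuinely analytic ingredient, the difference logarithmic-derivative estimate
\[
m\!\left(r,\frac{f(z+c)}{f(z)}\right)=S(r,f),\qquad m\!\left(r,\frac{f(z)}{f(z+c)}\right)=S(r,f),
\]
which in \cite{halburd2014} is obtained from the Poisson--Jensen formula together with a Borel-type lemma converting $\rho_2(f)<1$ into the slow-growth estimates $T(r+|c|,f)=T(r,f)+S(r,f)$ and $N(r+|c|,f)=N(r,f)+S(r,f)$, and then to derive (i)--(iii) from these by routine estimates.

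First I would prove (iii). Subadditivity of $m$ gives $m(r,f(z+c))\le m(r,f)+m\!\left(r,\frac{f(z+c)}{f(z)}\right)=m(r,f)+S(r,f)$, and the reverse inequality follows from $f(z)=f(z+c)\cdot\frac{f(z)}{f(z+c)}$ and the second estimate above; hence $m(r,f(z+c))=m(r,f)+S(r,f)$. For the poles, $n(t,f(z+c))$ is the number of poles of $f$ in the disc $\{|w-c|\le t\}$, which contains the disc of radius $t-|c|$ and is contained in the disc of radius $t+|c|$ about the origin, so $N(r,f(z+c))=N(r,f)+S(r,f)$ after invoking the slow-growth estimate for $N$; this is already half of (ii). Adding the proximity and counting parts yields $T(r,f(z+c))=T(r,f)+S(r,f)$, which is (iii).

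The remaining (zero) half of (ii) I would get by applying the pole equality just proved to $1/f$ in place of $f$: the function $1/f$ has the same hyper-order, and $S(r,1/f)=S(r,f)$ since $T(r,1/f)=T(r,f)+O(1)$, so $N\!\left(r,\frac{1}{f(z+c)}\right)=N\!\left(r,\frac{1}{f(z)}\right)+S(r,f)$. Finally, for (i), write $\frac{f^{(k)}(z+c)}{f(z)}=\frac{g^{(k)}(z)}{g(z)}\cdot\frac{f(z+c)}{f(z)}$ with $g(z):=f(z+c)$; the classical lemma on the logarithmic derivative gives $m\!\left(r,\frac{g^{(k)}}{g}\right)=S(r,g)$, which is $S(r,f)$ by (iii), while $m\!\left(r,\frac{f(z+c)}{f(z)}\right)=S(r,f)$ is the black-box estimate, and subadditivity of $m$ concludes.

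The only real obstacle is that black-box estimate itself: passing from the finite-order case (Chiang--Feng, Halburd--Korhonen) to hyper-order $<1$ needs the sharper Borel-type growth lemma and a more careful Poisson--Jensen bound, and in the present paper this is simply invoked from \cite{halburd2014}. Everything after that is bookkeeping, the points needing a little care being that a shift and the passage to $1/f$ preserve the error class $S(r,f)$, and the comparison between disc-centred and origin-centred counting functions for the pole part.
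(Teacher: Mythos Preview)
The paper does not prove this lemma; it is simply quoted from \cite{halburd2014} and stated without proof, so there is no ``paper's own proof'' to compare against. Your sketch is a reasonable outline of how (i)--(iii) follow once one grants the Halburd--Korhonen--Tohge difference logarithmic-derivative estimate and the associated slow-growth lemma $T(r+|c|,f)=T(r,f)+S(r,f)$ for $\rho_2(f)<1$; in particular, your reductions---deriving the pole part of (ii) from disc-inclusion and the slow-growth of $N$, the zero part by applying this to $1/f$, and (i) by factoring through $g(z)=f(z+c)$ and combining the classical logarithmic-derivative lemma with the shift estimate---are standard and correct.
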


The next result is concerned asymptotic estimation of exponential polynomials.
\begin{lem}\cite[Lemma 2.5.]{mzq2022}
\label{lm2.8}
Let $m$, $q$ be positive integers, $\omega_1,\ldots,\omega_m$ be distinct nonzero complex numbers, and $H_0,\,H_1,$ $\ldots,H_m$ be meromorphic functions of order less than $q$ such that $H_j\not\equiv 0, \, 1\le{j}\le{m}$. Set $\varphi(z)=H_0+\sum\limits_{j=1}^n{{H_j(z)}}e^{\omega_j{z^q}}$. Then the following statements hold.
\begin{itemize}
    \item[\rm{(i)}]There exist two positive numbers $d_1<d_2$, such that for sufficiently large $r$,   $d_1r^q{\le}T(r,\varphi){\le}d_2r^q$.
    \item[\rm{(ii)}]If $H_0\not\equiv0$, then $m\left(r,\frac{1}{\varphi}\right)=o\,(r^q)$.
\end{itemize}
\end{lem}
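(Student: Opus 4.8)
The plan is to treat $\varphi$ as an exponential polynomial in the variable $z^q$ and to extract its growth from the dominant exponential term in each direction. Write $z=re^{i\theta}$ and set $h_j(\theta)=\operatorname{Re}(\omega_j e^{iq\theta})$ for $1\le j\le m$, together with $h_0\equiv 0$, so that $\log|H_j(z)e^{\omega_j z^q}|=\log|H_j(z)|+r^q h_j(\theta)$. Since each $H_j$ has order $<q$, one has $T(r,H_j)=o(r^q)$ and, by the standard growth estimate for functions of order $<q$, $\log|H_j(re^{i\theta})|=o(r^q)$ uniformly in $\theta$ once $r$ avoids a possible exceptional set and $\theta$ avoids arcs of arbitrarily small total measure around the zeros and poles of $H_j$.

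\emph{Upper bound in $(i)$.} This is the routine half. Because each $e^{\omega_j z^q}$ is entire, the poles of $\varphi$ arise only from the $H_j$, whence $N(r,\varphi)\le\sum_j N(r,H_j)=o(r^q)$. Subadditivity of the proximity function gives $m(r,\varphi)\le\sum_j m(r,H_j)+\sum_j m(r,e^{\omega_j z^q})+O(1)$, and a direct computation of the integral of $\cos^+$ yields $m(r,e^{\omega_j z^q})=\tfrac{|\omega_j|}{\pi}r^q$. Combining these gives $T(r,\varphi)\le d_2 r^q$ for a suitable $d_2>0$. For the \emph{lower bound}, I exploit directional domination. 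The functions $h_0,\dots,h_m$ are pairwise distinct, so the finitely many directions in $[0,2\pi)$ where two of them coincide can be avoided; since some $\omega_j\ne0$ forces $h_j>0$ on an interval, I fix $\theta_0$ and an index $j_0\ge 1$ with $h_{j_0}(\theta_0)>h_k(\theta_0)$ for all $k\ne j_0$ and $h_{j_0}(\theta_0)>0$. On a narrow sector $|\theta-\theta_0|<\delta$ the strict gap persists, so using the pointwise bounds on $\log|H_j|$ the $j_0$-th term dominates and $|\varphi(z)|\ge\tfrac12|H_{j_0}(z)|e^{r^q h_{j_0}(\theta)}$ for large $r$ off the small exceptional arcs. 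Integrating $\log^+|\varphi|$ over this sector yields $m(r,\varphi)\ge d_1 r^q$, hence $T(r,\varphi)\ge m(r,\varphi)\ge d_1 r^q$.

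\emph{Part $(ii)$.} First I reduce to a constant term equal to $1$: since $m(r,1/\varphi)\le m(r,1/H_0)+m(r,H_0/\varphi)$ and $m(r,1/H_0)\le T(r,H_0)+O(1)=o(r^q)$, it suffices to estimate $m(r,1/\Phi)$ for $\Phi=\varphi/H_0=1+\sum_{j}(H_j/H_0)e^{\omega_j z^q}$, whose coefficients again have order $<q$. I then split the circle according to $G(\theta)=\max_{1\le j\le m}h_j(\theta)$. Where $G(\theta)\le-\eta$ all exponentials decay and $\Phi\to 1$, so $\log^+\tfrac1{|\Phi|}=0$ for large $r$; where $G(\theta)\ge\eta$ and the maximum is attained by a single index with a positive gap, $\Phi$ is exponentially large and again $\log^+\tfrac1{|\Phi|}=0$. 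The only surviving contribution comes from the finitely many Stokes directions where two or more dominant terms are comparable, together with the transition arcs $|G(\theta)|<\eta$.

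\emph{The main obstacle} is precisely this last contribution, because along a Stokes direction the dominant terms can cancel and $\Phi$ may vanish, so no pointwise lower bound for $|\Phi|$ is available. The key observation is that the cancellation is one-dimensional: near such a direction $\Phi$ behaves like $e^{r^q G}(A(z)+B(z)e^{i\xi})$ with $\xi$ essentially linear in $r^q(\theta-\theta_\ast)$ and $A,B$ of order $<q$, and the elementary bound $\int_0^{2\pi}\log^+\tfrac1{|A+Be^{i\xi}|}\,d\xi=O(1)$ shows that the $\theta$-integral across each Stokes arc stays bounded in $r$ up to an $o(r^q)$ term coming from $A,B$. Since the transition arcs can be taken of arbitrarily small total measure, their total contribution is $o(r^q)$, giving $m(r,1/\varphi)=o(r^q)$. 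I would remark that, alternatively, part $(ii)$ follows from the known fact that an exponential polynomial with non-vanishing constant term has Nevanlinna deficiency $\delta(0,\varphi)=0$, which by the first main theorem is equivalent to $m(r,1/\varphi)=o(T(r,\varphi))=o(r^q)$.
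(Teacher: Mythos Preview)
The paper does not prove this lemma; it is quoted as a known auxiliary result from \cite[Lemma~2.5]{mzq2022} and no argument is supplied. Consequently there is no in-paper proof against which to compare your attempt.

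For what it is worth, your outline follows the standard route for exponential polynomials (direct computation of $m(r,e^{\omega_j z^q})$ for the upper bound, a dominant-term sector for the lower bound, and a Stokes-direction decomposition for part~(ii)), and this is indeed the approach one finds in the literature on exponential sums going back to Steinmetz. Two places would need tightening before it becomes a proof. First, the claim that $\log|H_j(re^{i\theta})|=o(r^q)$ uniformly off small arcs is delicate for \emph{meromorphic} $H_j$: upper control is routine, but the lower bound on $|H_{j_0}|$ needed for your inequality $|\varphi|\ge\tfrac12|H_{j_0}|e^{r^qh_{j_0}}$ requires a separate argument (e.g.\ the Boutroux--Cartan lemma on the zeros of $H_{j_0}$). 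Second, the handling of the Stokes contribution in~(ii) via the model integral $\int\log^+|A+Be^{i\xi}|^{-1}\,d\xi$ is the right heuristic, but making it rigorous when more than two exponential terms are simultaneously comparable, and when $A,B$ carry their own $r$-dependence through the $H_j$, takes additional care. Your closing remark that~(ii) amounts to $\delta(0,\varphi)=0$ is correct and is the cleanest formulation.
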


The next lemma is a differential-difference version of the Clunie lemma.
\begin{lem}[\cite{hpc2019}]
\label{Clunie}
Let $f$ be a transcendental meromorphic solution  of of hyper-order $\rho_2(f)<1$ of the following equation
\begin{align*}
f^nP(z,f)=Q(z,f),
\end{align*}
where $P(z,f)$, $Q(z,f)$ are   differential-difference polynomials with meromorhphic coefficients $\alpha_\lambda\,(\lambda{\in}I)$, such that $m(r,\alpha_\lambda)=S(r,f)$,
$n$ is a positive integer, such that the total degree of $Q(z,f)$  is at most $n$. Then
\begin{align*}
m(r,P(z,f))= S(r,f).
\end{align*}
\end{lem}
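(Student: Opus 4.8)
The plan is to follow the classical splitting argument underlying Clunie's lemma, adapted to the differential-difference setting by invoking the logarithmic-derivative estimate of Lemma~\ref{lm2.7}. For each fixed large $r$, I would partition the circle $[0,2\pi)$ into the two sets $E_1=\{\theta:|f(re^{i\theta})|\le 1\}$ and $E_2=\{\theta:|f(re^{i\theta})|>1\}$, and then estimate $\frac{1}{2\pi}\int\log^+|P(z,f)|\,d\theta$ separately over each, the point being that the equation $f^nP=Q$ is used only where $|f|$ is large, while the polynomial structure of $P$ is used only where $|f|$ is small.

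On $E_2$ I would exploit the equation in the form $P=Q/f^n$. Writing a typical term of $Q(z,f)$ as $b_\mu(z)\prod_j[f^{(\nu_j)}(z+c_j)]^{k_j}$ with $\sum_j k_j\le n$ (the total-degree hypothesis), division by $f^n$ gives
\[
\frac{b_\mu(z)\prod_j[f^{(\nu_j)}(z+c_j)]^{k_j}}{f(z)^n}=b_\mu(z)\,\prod_j\left(\frac{f^{(\nu_j)}(z+c_j)}{f(z)}\right)^{k_j}\,f(z)^{\sum_j k_j-n}.
\]
Since $\sum_j k_j-n\le 0$ and $|f|>1$ on $E_2$, the last factor has modulus at most $1$ there, so on $E_2$ the quantity $|P|=|Q/f^n|$ is dominated by a finite sum of products of $|b_\mu|$ and of logarithmic-derivative-type quotients. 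Taking $\log^+$, integrating over $E_2$, and applying the coefficient bound $m(r,\alpha_\lambda)=S(r,f)$ together with Lemma~\ref{lm2.7}(i), the whole $E_2$ contribution is $S(r,f)$.

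On $E_1$ I would instead use the polynomial structure of $P$ directly. A typical term of $P(z,f)$ has the shape $a_\lambda(z)\prod_j[f^{(\mu_j)}(z+d_j)]^{s_j}$ with each $s_j\ge 0$; factoring out the power of $f$ rewrites it as $a_\lambda(z)\,f(z)^{\sum_j s_j}\prod_j\left(f^{(\mu_j)}(z+d_j)/f(z)\right)^{s_j}$, and on $E_1$ the factor $|f|^{\sum_j s_j}\le 1$ because the exponents are nonnegative. Hence the same logarithmic-derivative bound applies term by term, and the $E_1$ contribution is again $S(r,f)$. Adding the two partial estimates yields $m(r,P(z,f))\le S(r,f)$, which is the assertion.

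The main obstacle is the honest justification of the logarithmic-derivative reductions in the shifted and differentiated setting: every quotient produced by the two factorizations is of the form $f^{(k)}(z+c)/f(z)$, so the argument rests entirely on Lemma~\ref{lm2.7}(i), and this is precisely where the hyper-order hypothesis $\rho_2(f)<1$ is indispensable. One must also take a little care with the zeros of $f$, where the factorizations are only formal; but these contribute only through sets whose effect is absorbed into $S(r,f)$ in the proximity-function formulation, so the splitting estimate goes through without difficulty.
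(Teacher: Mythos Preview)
The paper does not supply its own proof of this lemma; it is quoted directly from \cite{hpc2019} and used as a black box. Your argument is the standard Clunie splitting adapted to the differential-difference setting, and it is correct: the decomposition into $E_1$ and $E_2$, the use of $P=Q/f^n$ on $E_2$ with the degree bound $\sum k_j\le n$, the direct polynomial bound on $E_1$, and the reduction of every quotient to $f^{(k)}(z+c)/f(z)$ handled by Lemma~\ref{lm2.7}(i) are exactly the ingredients one expects, and your remark about zeros of $f$ being harmless for the proximity-function formulation is the right way to close the loop. This is essentially how the cited reference proceeds, so there is nothing to contrast.
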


The following  inequality shows a relationship between $T(r,f)$ and  $n(r,f)$, which can be found in \cite{3}.
\begin{lem}\cite[P. 97]{3}
\label{lm3.1}
Let $f$ be a meromorphic function. Then
\begin{align*}
    n(t,f)\le\frac{r}{r-t}N(r,f)+O(1)\le\frac{r}{r-t}T(r,f)+O(1),
\end{align*}
where $r>t$ are positive real numbers.
\end{lem}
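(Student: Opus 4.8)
The plan is to derive the inequality directly from the integrated counting function together with one elementary estimate for the logarithm. First I would recall that for a meromorphic function $f$ one has the representation
\[
N(r,f)=\int_0^r\frac{n(s,f)-n(0,f)}{s}\,ds+n(0,f)\log r .
\]
Fixing $0<t<r$, I would discard the nonnegative contribution of the subinterval $[0,t]$ (note $n(s,f)\ge n(0,f)$ for all $s\ge 0$) and, using that $s\mapsto n(s,f)$ is nondecreasing so that $n(s,f)\ge n(t,f)$ on $[t,r]$, obtain
\[
N(r,f)\ \ge\ \int_t^r\frac{n(s,f)-n(0,f)}{s}\,ds\ \ge\ \bigl(n(t,f)-n(0,f)\bigr)\log\frac{r}{t}.
\]

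The next step is to replace the factor $\log(r/t)$ by $\dfrac{r-t}{r}$. This is exactly the elementary inequality $\log x\ge 1-\frac1x$ for $x>0$ (equivalently $\log x\le x-1$), applied with $x=r/t$; it gives $\log(r/t)\ge\dfrac{r-t}{r}$ and hence $\dfrac{1}{\log(r/t)}\le\dfrac{r}{r-t}$. Rearranging the previous display then yields
\[
n(t,f)\ \le\ \frac{N(r,f)}{\log(r/t)}+n(0,f)\ \le\ \frac{r}{r-t}\,N(r,f)+O(1),
\]
where the $O(1)$ absorbs the fixed constant $n(0,f)$ (and, for $r>1$, the term $n(0,f)\log r$ that was dropped above is itself nonnegative, so it does no harm).

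Finally, for the second inequality in the statement I would only need $N(r,f)\le m(r,f)+N(r,f)=T(r,f)$, so multiplying by the positive factor $\frac{r}{r-t}$ gives $n(t,f)\le\frac{r}{r-t}T(r,f)+O(1)$. There is essentially no genuine obstacle here: the one nontrivial ingredient is the logarithmic inequality used to pass from $\log(r/t)$ to $\frac{r-t}{r}$, and everything else is routine bookkeeping with the First Main Theorem; this is the classical estimate recorded in the cited reference, written in the form most convenient for the later proofs in this paper.
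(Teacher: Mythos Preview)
Your argument is correct and is the standard derivation of this estimate. Note, however, that the paper does not supply its own proof of this lemma: it is simply quoted from Cherry--Ye \cite[p.~97]{3}, so there is no in-paper proof to compare against. Your write-up matches the classical argument one finds in that reference (use the definition of $N(r,f)$, restrict the integral to $[t,r]$, apply monotonicity of $n(\cdot,f)$, and then replace $\log(r/t)$ by $(r-t)/r$ via $\log x\ge 1-1/x$). One small cosmetic point: the step discarding $n(0,f)\log r$ tacitly needs $r\ge 1$, which you already flag; since the lemma is only ever applied in the paper for large $r$ (in the proof of Lemma~\ref{lm2.1}), this is harmless.
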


By using the idea of \cite[Lemma 2.3]{lhf2021}, we get the following result, in which the restricted condition $\rho(f)<\infty$ in \cite[Lemma 2.3]{lhf2021} is relaxed to $\rho_2(f)<1$.

\begin{lem}
 \label{lm2.1}
Let $f$ be a transcendental meromorphic solution of $\rho_2(f)<1$ of the difference equation
\begin{align*}
    f^n(z)+P_d(z,f)=H(z),
\end{align*}
where $n\ge2$ is an integer, $P_d(z,f)$ is a difference polynomial in $f$ of degree
$d \le n-1$ with small functions of $f$ as its coefficients, and $H$ is a meromorphic function satisfying $N(r,H)=S(r,f)$. Then $N(r,f) = S(r,f) $ and $nT(r,f)=T(r,H)+S(r,f)$. Moreover, if the coefficients of $P_d(z,f)$ have no poles and $H(z)$ is an entire function, then $f$ is entire and $nT(r,f)=T(r,H)+S(r,f)$.
\end{lem}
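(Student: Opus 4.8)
The plan is to run the classical Tumura--Clunie argument in its difference-equation incarnation, using the hyper-order hypothesis $\rho_2(f)<1$ to guarantee that all the relevant shift terms contribute only $S(r,f)$. First I would observe that, since $P_d(z,f)$ has degree $d\le n-1$ in $f$ with small coefficients, each monomial $b(z)\prod_j f(z+c_j)^{k_j}$ of $P_d$ satisfies $m\big(r,\frac{b(z)\prod_j f(z+c_j)^{k_j}}{f^d}\big)=S(r,f)$; this follows from Lemma \ref{lm2.7}(i) (the shift logarithmic-derivative estimate) after writing $f(z+c_j)=f(z)\cdot\frac{f(z+c_j)}{f(z)}$ and using that $m\big(r,\frac{f(z+c)}{f(z)}\big)=S(r,f)$. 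Hence $m\big(r,\frac{P_d(z,f)}{f^d}\big)=S(r,f)$, and since $d\le n-1$ we get $m\big(r,\frac{P_d(z,f)}{f^{n-1}}\big)=S(r,f)$ as well.

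Next I would rewrite the equation as $f^{n-1}\big(f+\frac{P_d(z,f)}{f^{n-1}}\big)=H(z)$, i.e.\ $f^{n-1}\cdot G = H$ where $G:=f+P_d(z,f)/f^{n-1}$ and $m(r,G)=m(r,f)+S(r,f)$ by the previous step. To control the poles, note that a pole of $f$ of multiplicity $p$ contributes a pole of $f^n$ of multiplicity $np$, a pole of $P_d(z,f)$ of multiplicity at most $dp\le(n-1)p$ plus possible pole-contributions from the small coefficients, so on the pole-set of $f$ (away from the small exceptional poles) the left side has a genuine pole of order $np-(\text{something}\le (n-1)p)\ge p$, which must be matched by $H$. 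Since $N(r,H)=S(r,f)$, this forces $N(r,f)=S(r,f)$; a clean way to package this is to apply the difference Clunie-type result (Lemma \ref{Clunie}, in the purely difference case) to $f^{n-1}G = H$ after multiplying through to clear denominators, or simply to argue directly with the counting functions as above. Once $N(r,f)=S(r,f)$, combining with $m\big(r,P_d(z,f)/f^{n-1}\big)=S(r,f)$ gives $T(r,P_d(z,f)) \le (n-1)T(r,f)+S(r,f)$, hence $T(r,H)\le T(r,f^n)+T(r,P_d)+S(r,f)\le nT(r,f)+(n-1)T(r,f)+\cdots$, which is too crude; the sharp estimate $nT(r,f)=T(r,H)+S(r,f)$ instead comes from $m(r,H)=m(r,f^n G/f^{n-1}G\cdot\ldots)$—more precisely from $m(r,f^n)\le m(r,H)+m(r,P_d/f^{n-1}\cdot f^{n-1}/f^n)+\ldots$; the honest route is: from $f^n = H - P_d$ and $T(r,P_d)\le (n-1)T(r,f)+S(r,f)$ together with the Tumura--Clunie inequality $m(r,f^n)\le m(r,H)+S(r,f)$ (valid because the "remainder" $P_d$ has degree $\le n-1$), deduce $nm(r,f)\le m(r,H)+S(r,f)$, whence with $N(r,f)=S(r,f)$ and $N(r,H)=S(r,f)$ we get $nT(r,f)\le T(r,H)+S(r,f)$; the reverse inequality $T(r,H)\le nT(r,f)+S(r,f)$ is immediate from $H=f^n+P_d$ and $d\le n-1$. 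This yields $nT(r,f)=T(r,H)+S(r,f)$.

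For the moreover-part: if the coefficients of $P_d$ are entire (no poles) and $H$ is entire, then any pole of $f$ of multiplicity $p$ would, as computed above, produce a pole of $f^n+P_d(z,f)$ of multiplicity at least $np-(n-1)p=p\ge 1$, contradicting that $H=f^n+P_d$ is entire; hence $f$ has no poles and is entire, and the growth identity $nT(r,f)=T(r,H)+S(r,f)$ is a special case of what was just proved (now trivially $N(r,f)\equiv 0$).

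The main obstacle I anticipate is making the Tumura--Clunie majorization $m(r,f^n)\le m(r,H)+S(r,f)$ fully rigorous in the difference setting, since the standard proof differentiates $f^n+P_d=H$ and invokes the lemma on the logarithmic derivative; here one must instead use the shift analogue (Lemma \ref{lm2.7}(i)) and be careful that $\rho_2(f)<1$ is exactly the hypothesis under which $m\big(r,\frac{f(z+c)}{f(z)}\big)=S(r,f)$ holds. A secondary technical point is bookkeeping the finitely many poles coming from the small coefficients $b_i(z)$ of $P_d$ when deducing $N(r,f)=S(r,f)$—these contribute $S(r,f)$ and can be absorbed, but the estimate on multiplicities at common poles of $f$ and the $b_i$ needs a short separate argument or a direct appeal to Lemma \ref{Clunie}.
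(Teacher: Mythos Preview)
Your argument for $N(r,f)=S(r,f)$ (and for $f$ entire in the moreover part) has a genuine gap. You claim that at a pole $z_0$ of $f$ of multiplicity $p$, the difference polynomial $P_d(z,f)$ has a pole of multiplicity at most $dp\le(n-1)p$. That bound would be correct for a differential polynomial, but $P_d$ here is a \emph{difference} polynomial: its value at $z_0$ involves $f(z_0+c_1),\dots,f(z_0+c_s)$, not $f(z_0)$ itself, and there is no a priori bound on the pole orders of $f$ at those shifted points in terms of $p$. So the local comparison ``$np-(\text{something}\le(n-1)p)\ge p$'' is unjustified. Concretely, in $f(z)^2+f(z+c)=H(z)$ one can have $f$ with a simple pole at $z_0$ and a double pole at $z_0+c$, and then $H$ is regular at $z_0$. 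The suggested appeal to Lemma~\ref{Clunie} does not repair this, since that lemma controls proximity functions, not counting functions.

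The paper's route is quite different: it uses pole propagation. If $z_0$ is a pole of $f$ of order $p$, away from the sparse poles of $H$ and of the coefficients, then the order-$np$ pole of $f^n$ at $z_0$ must be cancelled by $P_d(z,f)$; since $P_d$ has degree $\le n-1$, this forces some shift $f(z_0+c_{s_1})$ to have a pole of order exceeding $\frac{n-\epsilon}{n-1}\,p$. Iterating produces poles at $z_l=z_0+c_{s_1}+\cdots+c_{s_l}$ with multiplicities $k_l>\big(\frac{n-\epsilon}{n-1}\big)^{l-1}$. Because $|z_l|$ grows only linearly in $l$ while $k_l$ grows geometrically, Lemma~\ref{lm3.1} yields $\limsup_{r\to\infty}\frac{\log^+T(r,f)}{r}>0$, contradicting $\rho_2(f)<1$. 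The same iteration with ratio $\frac{n}{n-1}$ gives the moreover part. Once $N(r,f)=S(r,f)$ is in hand, your Tumura--Clunie estimate $nm(r,f)=m(r,H)+S(r,f)$ is essentially the paper's argument and is fine.
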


\begin{proof}[Proof] Suppose that the coefficients of $P_d(z,f)$ are  small meromorphic functions of $f$  and $N(r,H)=S(r,f)$.
 Assume that $N(r,f)\not=S(r,f)$ for a contradiction. Then, let $z_0$ be a pole of $f$, which is not the pole of the coefficients of $P_d(z,f)$ and $H(z)$.
Let $P_d(z,f)=\sum\limits_{\lambda\in I}b_\lambda(z)[f(z+c_{1})]^{\lambda_1}\cdots[f(z+c_s)]^{\lambda_s}$, where $c_j, ~j = 1,\dots,s$, are distinct nonzero constants, $ \lambda_j,  ~j = 1,\dots,s$, are
nonnegative integers, $ \lambda= (\lambda_0,\dots,\lambda_s)$, $I$ is a finite index set of $\lambda$, $b_\lambda$
are small meromorphic functions of $f$.

For any given $\epsilon\in(0, \frac{1}{2})$, by using  similar way as in the proof of \cite[Lemma 2.3]{lhf2021}, we can obtaine a sequence $\{z_l\}$ of poles of $f$ with $f(z_l)=\infty^{k_l}$ and $k_l>\left(\frac{n-\epsilon}{n-1}\right)^{l-1}$, where $l\ge1$ is a positive integer, $z_l=z_0+c_{s_1}+\cdots+c_{s_l}$, $c_{s_i}\in\{c_1,\dots,c_s\}, ~i=1,\dots,l.$
Let $B(0, r)$ denote a disc of radius $r$ around $0$, $c=\max\limits_{1\le j\le s}\{|c_j|\}$. Since $z_l\in B(0,|z_0|+(l-1)c)$, set $r_l=|z_0|+(l-1)c$, then $n(r_l,f)>k_l>\left(\frac{n-\epsilon}{n-1}\right)^{l-1}.$

Let $\rho_0=\rho_2(f)<1$, thus $\log T(r,f)=O(r^{\rho_0})$. Let $r=2r_l$, by Lemma \ref{lm3.1}, we get
\begin{align*}
  0=\overline{\lim\limits_{r\to\infty}}{\frac{\log^+T(r,f)}{r}}
  \ge \overline{\lim\limits_{l\to\infty}}{\frac{\log^+\frac{1}{2}n(r_l,f) }{2r_l}}
  \ge\overline{\lim\limits_{l\to\infty}}{\frac{(l-1)\log^+\left(\frac{n-\epsilon}{n-1}\right) }{2|z_0|+2(l-1)c}} >0,
\end{align*}
which is impossible. Thus $N(r,f)=S(r,f)$.
By Lemma \ref{lm2.7} and the proof of \cite[Theorem 1.12]{ccy2003}, it is easy to have $nm(r,f)=m(r,f^n)=m(r,f^n+P_d(z,f))=m(r,H)+S(r,f)$. So  $nT(r,f)=T(r,H)+S(r,f)$.

If the coefficients of $P_d(z,f)$ have no poles, $H(z)$ is entire, we will prove $f$ is entire. Otherwise, suppose $z_0$ is a pole of $f$ for a contradiction. Then it is easy to get a sequence $\{z_l\}$ of poles of $f$ with $f(z_l)=\infty^{k_l}$, where $z_l=z_0+c_{s_1}+\cdots+c_{s_l}$, $c_{s_i}\in\{c_1,\dots,c_s\}, ~i=1,\dots,l$, $k_l\ge \left(\frac{n}{n-1}\right)^{(l-1)}$, $l\ge1$ is a positive integer. Then by using the same method above, we also get a contradiction. Then $f$  is an entire function. By Lemma \ref{lm2.7}, it is easy to have $nm(r,f)=m(r,H)+S(r,f)$. So  $nT(r,f)=T(r,H)+S(r,f)$. This proof is completed.
\end{proof}

\section{Proof  of Theorem  1.1}
\begin{proof}
Suppose that $f$ is a meromorphic solution of Eq. \eqref{F} with $\rho_2(f)<1$. Let $P_d(z,f)=P$. Then we rewrite \eqref{F} as
\begin{align}
\label{3.1}
f^n+P=p_1e^{a_1z}+p_2e^{a_2z}+p_3e^{a_3z}.
\end{align}
By the Lemma \ref{lm2.1}, Lemma \ref{lm2.7} and Lemma \ref{lm2.8}, we have $N(r,f)=S(r,f)$, $m(r,f)=O(r)$. So $T(r,f)=O(r)$.

Differentiating \eqref{3.1}: $nf^{n-1}f'+P'=p_1a_1e^{a_1z}+p_2a_2e^{a_2z}+p_3a_3e^{a_3z}$.  then eliminating $e^{a_2z}$ yields
\begin{align}
\label{3.2}
a_2f^n-nf^{n-1}f'+a_2P-P'=(a_2-a_1)p_1e^{a_1z}+(a_2-a_3)p_3e^{a_3z}.
\end{align}

Differentiating \eqref{3.2}: $na_2f^{n-1}f'-n(n-1)f^{n-2}(f')^2-nf^{n-1}f''+a_2P'-P''=a_1(a_2-a_1)p_1e^{a_1z}+a_3(a_2-a_3)p_3e^{a_3z}$. Then eliminating $e^{a_3z}$ or $ e^{a_1z}$ yields
\begin{align}
\label{3.3}
a_1a_2f^n-n(a_1+a_2)f^{n-1}f'+n(n-1)f^{n-2}(f')^2+nf^{n-1}f''+H_1
=(a_2-a_3)(a_1-a_3)p_3e^{a_3z},
\end{align}
where $H_1=a_1a_2P-(a_1+a_2)P'+P''$.
\begin{align}
\label{3.4}
a_2a_3f^n-n(a_2+a_3)f^{n-1}f'+n(n-1)f^{n-2}(f')^2+nf^{n-1}f''+H_2
=(a_3-a_1)(a_2-a_1)p_1e^{a_1z},
\end{align}
where $H_2=a_2a_3P-(a_2+a_3)P'+P''$.

Using the same idea, we have
\begin{align}
\label{3.5}
a_1a_3f^n-n(a_1+a_3)f^{n-1}f'+n(n-1)f^{n-2}(f')^2+nf^{n-1}f''+H_3
=(a_3-a_2)(a_1-a_2)p_2e^{a_2z},
\end{align}
where $H_3=a_1a_3P-(a_1+a_3)P'+P''$.

Rewriting \eqref{3.5} as
\begin{align}
\label{3.6}
f^{n-2}G(z)+H_3=(a_3-a_2)(a_1-a_2)p_2e^{a_2z},
\end{align}
where
\begin{align}
\label{3.7}
G=a_1a_3f^2-n(a_1+a_3)ff'+n(n-1)(f')^2+nff''.
\end{align}

Below, we discuss $G\equiv0$ or $G\not\equiv0$.

\setcounter{case}{0}
\begin{case}
\rm{
If $G\equiv0$.  By the \eqref{3.6}, we have
\begin{align}
\label{3.8}
H_3=a_1a_3P-(a_1+a_3)P'+P''=(a_3-a_2)(a_1-a_2)p_2e^{a_2}.
\end{align}

Solving the \eqref{3.8}, we have
\begin{align}
\label{3.9}
P_d(z,f)=P=p_2e^{a_2z}+c_1e^{a_1z}+c_3e^{a_3z},
\end{align}
where $c_3,~c_1$ are constants. Substituting \eqref{3.9} into \eqref{F}, we obtain
\begin{align}
\label{3.10}
f^n(z)=(p_1-c_1)e^{a_1z}+(p_3-c_3)e^{a_3z}.
\end{align}

From Theorem E and \eqref{3.10}, $p_3-c_3=0$ or $p_1-c_1=0$, that is $f(z)=\gamma_1e^{\frac{a_1z}{n}}$, $c_3=p_3$  $\gamma_1^n=p_1-c_1$ or $f(z)=\gamma_3e^{\frac{a_3z}{n}}$, $c_1=p_1$  $\gamma_3^n=p_3-c_3$ .

If $f(z)=\gamma_1e^{\frac{a_1z}{n}}$, substituting $f$ into \eqref{3.9}, and combining Lemma \ref{borel}, we get
\[c_1=0,~\frac{a_1}{a_2}=\frac{n}{d_1},~\frac{a_1}{a_3}=\frac{n}{d_2},~\gamma_1^n=p_1,~P_d(z,f)=p_2e^{a_2z}+p_3e^{a_3z}. \]
If $f(z)=\gamma_3e^{\frac{a_3z}{n}}$, substituting $f$ into \eqref{3.9}, and combining Lemma \ref{borel}, we get
\[c_3=0,~\frac{a_3}{a_2}=\frac{n}{d_3},~\frac{a_3}{a_1}=\frac{n}{d_4},~\gamma_3^n=p_3,~P_d(z,f)=p_2e^{a_2z}+p_1e^{a_1z}, \] where $~d_i\in\{1,\dots,d\}$ are integers$~i=1,2,3,4$.
}
\end{case}

\begin{case}
\rm{
If $G\not\equiv0$. we will prove $T(r,G)=S(r,f)$  first.  Without loss of generality, we assume that the  $P_d(z,0)\not=0$ is not equiv $0$.  Otherwise, we make
the transformation $g+a=f$ for a suitable constant $a$, such $a^n+P_d(z,a)\not=0$.

From the assumptions of the theorem, $0<\frac{a_2}{a_1}=t\le\frac{n-2}{n}<1$, where $t$ is a positive constant. From the Lemma \ref{c2.11} , we know $m(r,\frac{1}{f})=S(r,f)$, $m(r,\frac{e^{a_iz}}{f^n})=S(r,f),~i=1,2,3$.

By the \eqref{3.6}, we get
\begin{align}
\label{3.11}
    G(z)&=(a_3-a_2)(a_1-a_2)p_2\frac{e^{a_2z}}{f^{n-2}}-\frac{H_3}{f^{n-2}}
   % &=(a_3-a_2)(a_1-a_2)p_2\left(\frac{a_1z}{f^n}\right)^{\frac{t}{s}}\left(\frac{1}{f^{(n-2)s-nt}}\right)^{\frac{1}{s}}-\frac{H_3}{f^{n-2}}.
\end{align}

Let $z=re^{i\theta},~r>0,~\theta\in[0,2\pi)$, and $E_1=\{\theta\in[0,2\pi):|e^{a_1z}|\le1\}$, $E_2=\{\theta\in[0,2\pi):|e^{a_1z}|>1\}$. If $\theta\in E_1$, then $|e^{a_2z}|=|e^{a_1z}|^t\le1$, thus $\frac{|e^{a_2z}|}{|f^{n-2}|}\le\frac{1}{|f^{n-2}|}$. If $\theta\in E_2$, $\frac{|e^{a_2z}|}{|f^{n-2}|}=\frac{|e^{a_1z}|^t}{|f^{n-2}|}\le\frac{|e^{a_1z}|^{\frac{n-2}{n}}}{|f^{n-2}|}=\left(\frac{|e^{a_1z}|}{|f^{n}|}\right)^\frac{n-2}{n}$. Combining $m(r,\frac{1}{f})=S(r,f)$, $m(r,\frac{e^{a_1z}}{f^n})=S(r,f)$, we have $m(r,\frac{e^{a_2z}}{f^{n-2}})=S(r,f)$. By Lemma \ref{lm2.7} and $m(r,\frac{1}{f})=S(r,f)$, it is easy to see $m(r,\frac{H_3}{f^{n-2}})=S(r,f)$. Thus $m(r,G)=S(r,f)$, with $N(r,G)=S(r,f)$, we have proved $T(r,G)=S(r,f)$.

% Combining   $s(n-2)\ge nt$, Lemma \ref{lm2.7} and \eqref{3.11}, we have $m(r,G)=S(r,f)$. Now it is easy to see $T(r,G)=S(r,f)$.

Differentiating $G(z)$ yields $G'(z)=2a_1a_3ff'-n(a_1+a_3)ff''-n(a_1+a_3)(f')^2+n(2n-1)f'f''+nff'''$. Multiply $G$ and $G'$:
\begin{align}
\label{3.12}
(a_1a_3G')f^2-[n(a_1+a_3)G'+2a_1a_3G]ff'+[nG'+n(a_1+a_3)G]ff''-(nG)ff'''\\ \nonumber
=[n(2n-1)G]f'f''-[n(n-1)G'+n(a_1+a_3)G](f')^2.
\end{align}

By \eqref{3.7}, if $z_0$ is a multiple zero of $f$, then $z_0$ must be a zero of $G$. Thus $N_1(r,\frac{1}{f})=T(r,f)+S(r,f)$, where $N_1(r,\frac{1}{f})$ denotes the counting
function corresponding to simple zeros of $f$.
Suppose $z_1 $  is a simple zero of $f$  and not the pole of the  coefficients of \eqref{3.12}, by \eqref{3.12} $z_1$ must be a zero of $(2n-1)Gf''-[(n-1)G'+(a_1+a_3)G]f'$. Let
\begin{align}
\label{3.13}
A=\frac{(2n-1)Gf''-[(n-1)G'+(a_1+a_3)G]f'}{f}.
\end{align}

Then we have $T(r,A)=S(r,f)$. Subsisting \eqref{3.13} into \eqref{3.7} and eliminating $f''$ yields
\begin{align}
\label{3.14}
q_1f^2+q_2ff'+q_3(f')^2=G,
\end{align}
where $q_1=a_1a_3+\frac{nA}{(2n-1)G}$, $q_2=\frac{n(n-1)}{2n-1}[\frac{G'}{G}-2(a_1+a_3)]$, $q_3=n(n-1)$. By Lemma \ref{lm2.2} and \eqref{3.14}, we have
\begin{align}
\label{3.15}
    q_3(q_2^2-4q_1q_3)\frac{G'}{G}=q_3(q_2^2-4q_1q_3)'-q_2(q_2^2-4q_1q_3).
\end{align}
}
\end{case}

Next we will discuss whether $q^2\equiv4q_1q_3$.

\setcounter{subsection}{2}
\setcounter{subcase}{0}
\renewcommand{\thesubcase}{\arabic{subsection}.\arabic{subcase}}
\begin{subcase}
\rm{Suppose $q_2^2\equiv4q_1q_3$. We claim $q_2\not\equiv0$, otherwise $q_1\equiv0$. By \eqref{3.14}, which implies $q_3(f')^2=G$,  which is a contradiction.  Thus use the similar methods in the prove of \cite[Subcase 2.2 of Theorem 1.1]{cmf2024}, we can also get  $f''=\left(\frac{1}{2}\frac{G'}{G}-\frac{q_2}{2q_3}\right)f'-\frac{1}{q_2}\left(q_1'-q_1\frac{G'}{G}\right)f$ (\cite[equation (3.24)]{cmf2024} ), we omit the details. Substituting $q_2^2=4q_1q_3$ into the above equation of $f''$, then
\begin{align}
\label{3.16}
f''=\frac{1}{2n-1}\left[(n-1)\frac{G'}{G}+(a_1+a_3)\right]f'-\frac{1}{2(2n-1)}\left[\frac{G'}{G}-\frac{1}{2}\left(\frac{G'}{G}\right)^2+(a_1+a_3)\frac{G'}{G}\right]f.
\end{align}
By \eqref{3.16} and \eqref{3.13} that
\begin{align}
\label{3.17}
\frac{A}{G}=-\frac{1}{2}\left[\left(\frac{G'}{G}\right)'-\frac{1}{2}\left(\frac{G'}{G}\right)^2+(a_1+a_3)\frac{G'}{G}\right].
\end{align}

Now we claim $G'\equiv0$. Otherwise,  suppose $G'\not\equiv0$, differentiating \eqref{3.17}, we get
\begin{align}
\label{3.18}
  \left(\frac{A}{G}\right)'=-\frac{1}{2}\left[\left(\frac{G'}{G}\right)''-\frac{1}{2}\left(\frac{G'}{G}\right)\left(\frac{G'}{G}\right)'+(a_1+a_3)\left(\frac{G'}{G}\right)'\right].
\end{align}
Differentiating $q^2\equiv4q_1q_3$, get $q_2q_2'=2q_1'q_3$, that is
\begin{align}
\label{3.19}
\left(\frac{A}{G}\right)'=\frac{n-1}{2(2n-1)}\left(\frac{G'}{G}\right)'\left[\frac{G'}{G}-2(a_1+a_3)\right].
\end{align}
Denote $\phi=\frac{G'}{G}$. By \eqref{3.18} and \eqref{3.19}, we have
\begin{align}
\label{3.20}
(a_1+a_3)\phi'=n\phi\phi'-(2n-1)\phi''.
\end{align}

If $\phi'\equiv0$, then $G=c_5e^{c_6z}$, $c_5,c_6$ are constants. Because $G'\not\equiv0$, so $c_5c_6\not=0$. Which means $T(r,G)=O(r)$ a contradiction.

If $\phi'\not\equiv0$, from \eqref{3.20} that
\begin{align*}
e^{(a_1+a_3)z}=c_7G^n(\phi')^{-(2n-1)},~~c_7\in\mathbb{C}\setminus\{0\}
\end{align*}
}
which implies $a_1+a_3=0$. Subsisting $a_1+a_3=0$ into \eqref{3.20}, then
\begin{align}
\label{3.21}
 c_7G^n=\left[\left(\frac{G'}{G}\right)'\right]^{2n-1}.
\end{align}
If $z_3$ is a  zero of $G$, then $z_3$ is a pole of $\frac{G'}{G}$. Subsisting $z_3$ into \eqref{3.21}, we get a contradiction. If $z_4$ is a pole of $G$ of multiplicity $k$, where $k$ is a positive integer, then $z_4$ is a pole of $\frac{G'}{G}$ of multiplicity $1$. Subsisting $z_4$ into \eqref{3.21}, we get $kn=4n-2$. Which implies $n=\frac{-2}{k-4}$, so $0<k\le3$. We get $n=2$ or $n=1$, this contradicts the hypothesis $n\ge 3$. Thus $G$ does not have  zero and pole. Since $T(r,G)=o(r)$, thus $G$ is a constant, which  contradicts with $G'\not\equiv0$.

Now we get $G'\equiv0$, then by \eqref{3.17} $A\equiv0$.  By \eqref{3.13}, then $(2n-1)\frac{f''}{f'}=a_1+a_3$. Which implies $f(z)=c_8e^{\frac{(a_1+a_3)z}{2n-1}}+c_9,$ where $~c_8, ~c_9\in\mathbb{C}\setminus 0$. Subsisting $f$ into \eqref{F}, by the Lemma 2.6, we have the follow asserts.

If $\frac{n(a_1+a_3)}{2n-1}=a_1$, we get $\frac{a_1}{a_3}=\frac{n}{n-1}$.  Thus $f(z)=t_1e^{\frac{a_1z}{n}}+t_0$, where $t_0\not=0$ is a constant, $t_1^n=p_1,$ $-t_0^n$ equals to  the constant term of $P_d(z,t_1e^{\frac{a_1z}{n}}+t_0)$.

If $\frac{n(a_1+a_3)}{2n-1}=a_2$, we get $\frac{d_5(a_1+a_3)z}{2n-1}=a_1$, where $0<d_5\le n-2$ is a integer. Which implies $d_5a_2=na_1$, this contradicts the hypothesis $\frac{a_2}{a_1}\le\frac{n-2}{n}$.

If $\frac{n(a_1+a_3)}{2n-1}=a_3$, we get $\frac{a_1}{a_3}=\frac{n-1}{n}$.  Thus $f(z)=t_2e^{\frac{a_3z}{n}}+t_3$, where $t_3\not=0$ is a constant, $t_2^n=p_3,$ $-t_3^n$ equals the constant term of $P_d(z,t_2e^{\frac{a_3z}{n}}+t_3)$.

\end{subcase}

\begin{subcase}
\rm{
Suppose $q_2^2\not\equiv4q_1q_3$. From \eqref{3.15} that
\begin{align*}
 2(a_1+a_3)=2n\frac{G'}{G}-(2n-1)\frac{(q^2-4q_1q_3)'}{q^2-q_1q_3}
\end{align*}
then there exists $c_3\not=0$ such that $e^{2(a_1+a_3)z}=c_3G^{2n}(q^2-4q_1q_3)^{-(2n-1)}.$ Thus $a_1=-a_3$, then  $f^{n-2}G-a_1a_3f^n=n(n-1)f^{n-2}(f')^2+nf^{n-1}f''$. Substituting $f^{n-2}G-a_1a_3f^n$ into \eqref{3.3} and \eqref{3.5}, we have
\begin{align*}
a_1f^n-nf^{n-1}f'+\frac{f^{n-2}G}{a_1+a_2}+\frac{H_1}{a_1+a_2}=2a_1p_3e^{a_3z}
\end{align*}
\begin{align*}
a_1f^n+nf^{n-1}f'+\frac{f^{n-2}G}{a_1-a_2}+\frac{H_2}{a_1-a_2}=2a_1p_1e^{a_1z}
\end{align*}

Multiplying $2a_1p_3e^{a_3z}$ and $2a_1p_1e^{a_1z}$, we have
\begin{align}
\label{3.22}
a_1^2f^{2n}-n^{2}f^{2(n-1)}(f')^2=4a_1^2p_1p_3+F(z,f),
\end{align}
where $F(z,f)$ is a differential-difference polynomial of $f$ of degree at most $2n-2$. Further write the equation \eqref{3.22} in
to $f^{2n-2}\Omega(z)=4a_1^2p_1p_3+F(z,f)$, where $\Omega(z)=a_1^2f^2-n^2(f')^2$. By the Lemma \ref{Clunie}, we have $m(r,\Omega)=S(r,f)$. Combining $N(r,f)=S(r,f)$, yields $T(r,\Omega)=S(r,f)$. By Lemma \ref{ccy2004}, then $\Omega$ is a constant. So differentiating $\Omega$, we have
\begin{align}
\label{3.23}
2a_1^2f=2n^2f''.
\end{align}

Solving the \eqref{3.23}, we have
\begin{align*}
    f(z)=\gamma_4e^{\frac{a_1z}{n}}+\gamma_5e^{\frac{a_3z}{n}},
\end{align*}
where $\gamma_4$ and $\gamma_5$ are nonzero constant. Subsisting $f$ in \eqref{F}, then $\gamma_4^n=p_1$, $\gamma_5^n=p_3$.

It's worth noting that we started Case 2 with the assumption $P_d(z,0)\not\equiv0$. So if $P_d(z,0)=0$, we have
\begin{align*}
   f(z)=\gamma_4e^{\frac{a_1z}{n}}+\gamma_5e^{\frac{a_3z}{n}}+\gamma_6,
\end{align*}
where $\gamma_6$ is a constant.
}
\end{subcase}
\end{proof}

\section{Proofs of Theorems 1.2 and 1.3}
\begin{proof}[Proof of Theorem 1.2] Suppose that equation \eqref{F} admits a meromorphic solution $f$ with $\rho_2(f)<1$.
Let $P_d(z,f)=P$, then we rewrite \eqref{F} as
\begin{align}
\label{4.1}
f^n+P=p_1e^{a_1z}+p_2e^{a_2z}+p_3e^{a_3z}.
\end{align}
By Lemma \ref{lm2.7} and Lemma \ref{lm2.8}, we have $m(r,f)=O(r)$. So $f$ is transcendental. By Lemma \ref{lm2.1}, $f$ is an entire function and $T(r,f)=O(r)$.

 Without loss of generality, we assume of $P_d(z,0)\not\equiv0$.  Otherwise, we make
the transformation $g+a=f$ for a suitable constant $a$, such $a^n+P_d(z,a)\not=0$. Then \eqref{4.1} is converted
to the form $g^n+Q(z,g)= p_1e^{a_1z}+p_2e^{a_2z}+p_3e^{a_3z}$, where $Q(z,g)$ is a difference polynomial in $g$ of degree $d=n-1$ and $Q(z,0)=a^n+P_d(z,a)\not\equiv0$.

 By Lemma \ref{c2.11}, we have $m(r,\frac{1}{f})=S(r,f)$, and $m(r,\frac{a_j}{f^n})=S(r,f),~j=1,2,3$.
Since $\frac{a_1}{a_2}\in \{ \frac{s}{n}:1\le s \le n-1\}$,  $\frac{a_3}{a_2}\in \{ \frac{t}{n}:1\le t \le n-1\}$, where $s,t$ are positive integer. $\frac{e^{a_1z}}{f^{n-1}}=\left(\frac{e^{a_2z}}{f^n}\right)^{\frac{s}{n}}\frac{1}{f^{n-1-s}}$. $\frac{e^{a_3z}}{f^{n-1}}=\left(\frac{e^{a_2z}}{f^n}\right)^{\frac{t}{n}}\frac{1}{f^{n-1-t}}$, then we have $m(r,\frac{e^{a_1z}}{f^{n-1}})=S(r,f)$ and $m(r,\frac{e^{a_3z}}{f^{n-1}})=S(r,f)$.

Differentiating \eqref{4.1}  and eliminating $e^{a_2z}$ yields
\begin{align}
\label{4.2}
f^{n-1}(a_2f-nf')+a_2P-P'=(a_2-a_1)p_1e^{a_1z}+(a_2-a_3)p_3e^{a_3z}.
\end{align}

 Let $E_1=\{\theta\in[0,2\pi):|f(re^{i\theta})|\le1\}$, $E_2=\{\theta\in[0,2\pi):|f(re^{i\theta})|>1\}$, and
 \begin{align}
 \label{4.3}
\phi(z)=a_2f-nf'.
 \end{align}

If $\theta_1\in E_1$, then $ \phi(re^{i\theta_1}) = a_2f(re^{i\theta_1})-f(re^{i\theta_1})\frac{f'(re^{i\theta_1})}{f(re^{i\theta_1})}$. By logarithmic derivative lemma and $|f(re^{i\theta})|\le1$, we have  $\int_{E_1}\log^{+}|\phi(re^{i\theta_1})|d\theta=S(r,f)$.
If  $\theta_1\in E_2$, then rewrite \eqref{4.3} into  $\phi=-\frac{a_2P-P'}{f^{n-1}}+(a_2-a_1)p_1\frac{e{^{a_1z}}}{f^{n-1}}+(a_2-a_3)p_3\frac{e^{a_3z}}{f^{n-1}}$. By Lemma \ref{lm2.7}, $|f(re^{i\theta})|>1$, $m(r,\frac{e^{a_1z}}{f^{n-1}})=S(r,f)$ and $m(r,\frac{e^{a_3z}}{f^{n-1}})=S(r,f)$, then $\int_{E_2}\log^{+}|\phi(re^{i\theta_1})|d\theta=S(r,f)$. So $\phi$ is an entire function satisfies $m(r,\phi)=S(r,f)$.

Using Lemma \ref{lm3.5}  to \eqref{4.3},  we get $f(z)=c_1e^{\frac{a_2z}{n}}+\gamma(z)$, where $|\gamma(z)|\le e^{o(1)|z|}$ uniformly in $\overline S_{1,\epsilon}\cup \overline S_{2}$ as $z\to\infty$, $c_1$ is a constant. Because $T(r,f)=O(r)$, so $\rho(f)=\mu(f)=1$, $\rho(\gamma)\le1$. If $\rho(\gamma)<1$
, then $\gamma$ is a small function of $f$. If $\rho(\gamma)=1$, then by applying the Phragm\'en-Lindel\"of theorem (see \cite[Theorem 7.3]{holl}) to the $\gamma(z)$, we have $|\gamma(z)|\le e^{o(1)|z|}$ uniformly as $z\to\infty$ in $\mathbb{C}$, so $\gamma$ is  also a small function of $f$.
Subsisting $f$ into \eqref{F}, we have $c_1^n=p_2$. This completes the proof.
\end{proof}

\begin{proof}[Proof of Theorem 1.3]
Suppose $f$ is a meromorphic solution \eqref{th1.3} such that
$T(r,p_j)$$=S(r,f),$ $~j=1,2,3$. We aim for a contradiction. It is easy to see
$T(r,f)\le O(r)+S(r,f)$  and $N(r,f)=S(r,f)$. Thus the small function of
$f$ is also the small function of $e^{z}$. By Lemma \ref{lm2.8}, we have  $T(r,f)=O(r)$,
write \eqref{th1.3} as
\begin{align*}
(fe^{\frac{-a_3z}{n}})^n-p_3=p_1e^{(a_1-a_3)z}+p_2e^{(a_2-a_3)z}.
\end{align*}

Let $g=fe^{\frac{-a_3z}{n}}$, $b_1=a_1-a_3\not=0$, $b_2=a_2-a_3\not=0$. Then
\begin{align}
\label{4.4}
g^n-p_3=p_1e^{b_1z}+p_2e^{b_2z}.
\end{align}
By Lemma \ref{lm2.8}, we have
\begin{align}
\label{4.5}
T(r,g)=O(r),~m(r,\frac{1}{g})=S(r,g).
\end{align}
Thus $S(r,g)=S(r,f)=S(r,e^{z})$,  then $~N(r,g)=S(r,g).$
Differentiating \eqref{4.4} yields
\begin{align}
\label{4.6}
ng^{n-1}g'-p_3'=(p_1'+b_1p_1)e^{b_1z}+(p_2'+p_2b_2)e^{b_2z}.
\end{align}
Eliminating $e^{b_2z}$ from \eqref{4.6} and \eqref{4.4}, we have
\begin{align}
\label{4.7}
    np_2g^{n-1}g'-(p_2'+p_2b_2)g^n+P=Ae^{b_1z},
\end{align}
where $P=-p_2p_3'+p_3(p_2'+b_2p_2)$, $A=p_2(p_1'+b_1p_1)-p_1(p_2'+b_2p_2)$.
We claim $A\not\equiv0$, $P\not\equiv0$. Otherwise, if $A\equiv0$, then $b_1-b_2=\frac{p_2'}{p_2}-\frac{p_1'}{p_1}$. By integration, we get $e^{(b_1-b_2)z}=c\frac{p_2}{p_1}, c \in\mathbb{C}\setminus0$, which implies $b_1=b_2$, it is a contradiction. Thus $A\not\equiv0$.
Using the same idea we have $P\not\equiv0$.
Differentiating \eqref{4.7} yields
\begin{align}
\label{4.8}
np_2g^{n-2}(g')^2+np_2'g^{n-1}g'&+np_2g^{n-1}g''-(p'+p_2b_2)'g^n+(p'+p_2b_2)g^{n-1}g'+P'\\ \nonumber
&=(A'+Ab_1)e^{b_1z}.
\end{align}
Eliminating $e^{b_1z}$ from \eqref{4.7} and \eqref{4.8}, then
\begin{align}
\label{4.9}
    g^{n-2}(d_1g^2+d_2gg'+d_3(g')^2+d_4gg'')=R,
\end{align}
where $d_1=(p'+p_2b_2)(A'+Ab_1)-A(A'+Ab_1)'$, $d_2=-np_2(b_1+b_2)A-np_2A'$, $d_3=np_2A$, $d_4=np_2A$, $R=(A'+Ab_1)P-AP'$.

Let $Q(z,g)=d_1g^2+d_2gg'+d_3(g')^2+d_4gg''$, we claim $Q(z,g)\not\equiv0$. Otherwise $R\equiv0=(A'+Ab_1)P-AP'$, by integration $\frac{A}{P}=c_2e^{b_1z},~ c_2 \in\mathbb{C}\setminus0$, it is a contradiction.

By \eqref{4.9} and $m(r,\frac{1}{g})=S(r,g)$, we get $m(r,Q)=S(r,g)$. Thus $T(r,Q)=S(r,g)$.  $T(r,g^{n-2})=T(r,\frac{R}{Q})=S(r,g)$, note that $n\ge3$, we get a contradiction, and then the proof is completed.
\end{proof}

\section*{Declarations}
\begin{itemize}
\item \noindent{\bf Funding}
This research work is supported by the National Natural Science Foundation of China (Grant No. 12261023, 11861023), the Foundation of Science and Technology project of Guizhou Province of China (Grant No. [2018]5769-05).

\item \noindent{\bf Conflicts of Interest}
The authors declare that there are no conflicts of interest regarding the publication of this paper.

\end{itemize}

%\noindent{\bf Data Availability}

%\noindent The data used to support the findings of this study are included within the article.

%\noindent{\bf Conflicts of Interest}

%\noindent The authors declare that there are no conflicts of interest regarding the publication of this paper.

%\noindent{\bf Authors' Affiliations}

%\noindent The all authors come from School of Mathematical Sciences, Guizhou Normal University, China.

%\noindent{\bf Authors' Contributions}

%\noindent The all authors have the equal contributions for the manuscript. All authors read and approved the final manuscript.

\end{document}